\documentclass[letterpaper]{article}

\usepackage[english]{babel}

\usepackage[T1]{fontenc}

\usepackage[a4paper,top=3cm,bottom=3cm,left=3cm,right=3cm,marginparwidth=1.75cm]{geometry}

\usepackage{graphicx}
\usepackage{url}
\usepackage{amsmath,amsfonts,amssymb,color}
\usepackage[colorlinks]{hyperref}
\usepackage{mathptmx}     
\usepackage{amsmath,amsfonts,amsthm,amssymb,color}

\usepackage[shortlabels]{enumitem}

\newtheorem{theorem}{Theorem}[section]
\newtheorem{corollary}[theorem]{Corollary}
\newtheorem{lemma}[theorem]{Lemma}

\newtheorem{conjecture}[theorem]{Conjecture}

\theoremstyle{definition}
\newtheorem{definition}[theorem]{Definition}

\begin{document}

\title{Diagonal form of the Varchenko matrices for oriented matroids
}

\author{Assylbek Olzhabayev, YiYu Zhang
}

\maketitle

\begin{abstract}

The construction of the Varchenko matrix for hyperplane arrangements, first introduced by Alexandre Varchenko, extends naturally to oriented matroids. In this paper, we generalize the theorem of Gao and Zhang by proving that the Varchenko matrix of an oriented matroid has a diagonal form if and only if the pseudohyperplane arrangement corresponding to the oriented matroid is in semigeneral position, i.e. it does not contain a degeneracy. 

Furthermore, we show that the Varchenko matrix of a pseudoline arrangement has a block diagonal form.  This also provides an alternative combinatorial proof for the Varchenko matrix determinant formula in dimension two.

\end{abstract}

\section{Introduction}
Varchenko \cite{varchenko1993bilinear} defined the Varchenko matrix associated with any \textit{hyperplane arrangement}. The first studies on the existence of a diagonal form of the Varchenko matrix were conducted by Denham and Hanlon \cite{denham1997smith}, followed by Cai and Mu \cite{cai2016smith} and, finally, by Gao and Zhang \cite{gao2018diagonal}, who proved that the \textit{Varchenko matrix} has a diagonal form if and only if the corresponding arrangement is in \textit{semigeneral position}, i.e. contains no degeneracy. Recently, Hochst{\"a}ttler and Welker \cite{hochstattler2018varchenko} defined the Varchenko matrix for oriented matroids, which can be viewed as a generalization of hyperplane arrangements, and computed its determinant. 

\

In this paper, we note a straightforward generalization of the main theorem in \cite{gao2018diagonal} and its proof to oriented matroids via the topological representation by \textit{pseudohyperplane arrangements}, which determines the condition for the Varchenko matrix of an oriented matroid to have a diagonal form.
\begin{theorem} \label{gen}
For $\mathcal{M}$ an oriented matroid, let $\mathcal{A}=\{H_1,\ldots,H_N\}$ be its corresponding pseudohyperplane arrangement. Assign an indeterminate $x_a$ to each $H_a$, $a\in I=\{1,\ldots,N\}$. Then the Varchenko matrix $V$ associated with $\mathcal{A}$ has a diagonal form over $\mathbb{Z}[x_1,\ldots,x_N]$ if and only if $\mathcal{A}$ is in semigeneral position. In that case, the diagonal entries of the diagonal form of $V$ are exactly the products $$\prod_{a\in B}(1-x^2_a)$$  ranging over all $B\subseteq I$ such that $H_B\in L(\mathcal{A}).$
\end{theorem}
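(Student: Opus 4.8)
The plan is to leverage the Folkman--Lawrence topological representation theorem, which realizes $\mathcal{M}$ as the pseudohyperplane arrangement $\mathcal{A}$ on a sphere, so that every notion the argument needs is available combinatorially: the topes of $\mathcal{A}$ (which index $V$), the set of pseudohyperplanes separating two topes (which gives each entry of $V$ as a monomial in the $x_a$), the deletion $\mathcal{A}\setminus H_a$, the restriction $\mathcal{A}^{H_a}$ of $\mathcal{A}$ to a fixed $H_a$, and the lattice of flats $L(\mathcal{A})$. Since deletion and restriction of oriented matroids are standard operations and $V$ depends only on the data above, the strategy is to run the Gao--Zhang induction, verifying that each of its geometric inputs is combinatorial and hence survives the passage from genuine to pseudo-hyperplanes. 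I would prove the two implications separately.

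For sufficiency, assume $\mathcal{A}$ is in semigeneral position and induct on $N$ together with the rank of $\mathcal{M}$. Fix $H_a$. The topes of $\mathcal{A}$ partition into those meeting $H_a$, which come in mirror pairs across $H_a$ indexed by the topes of $\mathcal{A}^{H_a}$, and those not meeting $H_a$, giving $r(\mathcal{A})=r(\mathcal{A}\setminus H_a)+r(\mathcal{A}^{H_a})$. Applying unimodular row and column operations over $\mathbb{Z}[x_1,\dots,x_N]$ that symmetrize and antisymmetrize each mirror pair peels a factor $1-x_a^2$ off the antisymmetric part; the antisymmetric part can be fully decoupled from the symmetric part by further unimodular operations \emph{precisely} when $\mathcal{A}$ is in semigeneral position, yielding a reduction of $V$ to the Varchenko matrix of $\mathcal{A}\setminus H_a$ together with $1-x_a^2$ times that of $\mathcal{A}^{H_a}$. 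The crucial point is that both smaller arrangements remain in semigeneral position: deletion clearly preserves it, and for the restriction one checks that a dependent set $B$ with $a\notin B$ and $H_B\cap H_a\neq\emptyset$ would make $B\cup\{a\}$ a degeneracy of $\mathcal{A}$. Thus the inductive hypothesis applies, and matching flats finishes the bookkeeping: the flats $H_B\in L(\mathcal{A})$ with $a\notin B$ are those of the deletion, while the flats with $a\in B$ are those of the restriction carrying the extra factor $1-x_a^2$, reproducing exactly the diagonal entries $\prod_{a\in B}(1-x_a^2)$.

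For necessity I argue contrapositively, using that a diagonal form is preserved under any ring homomorphism: if $V=PDQ$ with $P,Q$ invertible and $D$ diagonal, then the image under $\mathbb{Z}[x_1,\dots,x_N]\to S$ is again diagonalizable. Suppose $\mathcal{A}$ has a degeneracy, so three pseudohyperplanes $H_{a_1},H_{a_2},H_{a_3}$ meet in a rank-two flat. Specializing $x_b\mapsto 0$ for every $b\notin\{a_1,a_2,a_3\}$ annihilates every entry of $V$ whose separating set uses an outside pseudohyperplane; reducing to the sub-oriented-matroid on $\{a_1,a_2,a_3\}$ localized at the degenerate flat then leaves the explicit $6\times 6$ Varchenko matrix of three concurrent pseudolines, whose six topes realize all sign patterns on $\{a_1,a_2,a_3\}$ except two antipodal ones. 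It suffices to show this matrix has no diagonal form, which I would do by computing its chain of determinantal (Fitting) ideals $I_k$, invariants of the class under $V\mapsto PVQ$: the factorization of $\det$ into powers of $1\pm x_{a_i}$ constrains any putative diagonal to finitely many candidates, and a direct check shows none realizes the whole chain. Since every entry here is a monomial read off from the separating sets of those six topes, the computation is identical for concurrent pseudolines and for concurrent lines.

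The genuinely new work, and what I expect to be the main obstacle, is not any single calculation but the verification that Gao--Zhang's geometric lemmas are purely combinatorial. Two points demand care: in the sufficiency step one must confirm that the mirror pairing of topes across $H_a$ and the clearing of the antisymmetric block rest only on $H_a$ being a tamely embedded pseudosphere of codimension one together with the behavior of oriented-matroid deletion and contraction, with no appeal to linearity; and in the necessity step one must confirm both that the specialization genuinely isolates the local model and that the determinantal-ideal obstruction of that model is not washed out, so that non-diagonalizability is a statement about the covectors around the degenerate flat alone. Once these reformulations are in place, the topological representation theorem guarantees that both directions transfer, and the theorem follows.
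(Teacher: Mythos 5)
You take a genuinely different route from the paper, and the route as sketched has a genuine gap at its central step. The paper (following Gao and Zhang) proves sufficiency by constructing a ``good numbering'' of the regions in which each newly indexed region encompasses exactly one new element of the intersection poset (Lemma~\ref{lemma:5}), and then explicitly tracks every entry of the matrix through a fixed sequence of row and column eliminations (Lemma~\ref{lemma:6}); you instead propose a deletion--restriction induction on a single pseudohyperplane $H_a$. The step on which your entire sufficiency argument rests --- that ``the antisymmetric part can be fully decoupled from the symmetric part by further unimodular operations precisely when $\mathcal{A}$ is in semigeneral position'' --- is exactly the content of the theorem, and you give no mechanism for it. Note first that literally symmetrizing and antisymmetrizing a mirror pair is not unimodular over $\mathbb{Z}[x_1,\ldots,x_N]$ (that change of basis has determinant $\pm 2$); the operation that does work is $\mathrm{row}(T^+)\mapsto \mathrm{row}(T^+)-x_a\,\mathrm{row}(T^-)$, which zeroes the columns on the $T^-$ side and leaves $(1-x_a^2)V_{T^+,S}$ in every column $S$ on the $T^+$ side, including columns of topes not adjacent to $H_a$. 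Clearing those cross terms over $\mathbb{Z}[x_1,\ldots,x_N]$ without denominators, and identifying the complementary block with $V(\mathcal{A}\setminus H_a)$ (whose entries must no longer involve $x_a$ at all), is where all the difficulty lives, and it is precisely what the paper's numbering-plus-elimination bookkeeping is designed to control. Until you exhibit those operations and show they succeed exactly under semigenerality, the sufficiency direction is a restatement of the goal rather than a proof.

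Two further points. In the necessity direction, specializing $x_b\mapsto 0$ for $b\notin\{a_1,a_2,a_3\}$ does not in general isolate the $6\times 6$ local model: if a fourth pseudohyperplane $H_c$ also contains the degenerate flat, setting $x_c=0$ disconnects the six topes surrounding the flat and the surviving diagonal blocks are smaller than $6\times 6$, so you must retain all pseudohyperplanes through the chosen flat (or specialize them to a common variable) before running the Fitting-ideal obstruction; that obstruction itself is sound and close in spirit to the argument the paper inherits from Gao and Zhang. Finally, your verification that the restriction $\mathcal{A}^{H_a}$ is semigeneral should also rule out two pseudohyperplanes restricting to the same pseudohyperplane of $H_a$ (semigenerality does rule this out, since $H_b\cap H_a=H_c\cap H_a\neq\emptyset$ is itself a degeneracy of $\mathcal{A}$); otherwise the assignment of variables in the restriction block, and hence the claimed matching of diagonal entries with flats $H_B$ containing $a$, breaks down.
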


Then we show that the Varchenko matrix of a pseudoline arrangement has a \textit{block diagonal form}, i.e., it can be represented by a block matrix where every non-zero block is on the main diagonal.  For each degenerate intersection formed by $n$ pseudolines, there is a block of size $n-1$ on the diagonal called a \textit{leftover matrix}, whose entries are determined explicitly and combinatorially. Each pseudoline, nondegenerate intersection point, and the ambient plane corresponds to one diagonal entry, as in the case of semigeneral hyperplane arrangements in \cite{gao2018diagonal}. More precisely, we prove the following theorem:

\begin{theorem}\label{theorem45}
Let $\mathcal{A}=\{l_1,\ldots, l_n\}$ be a pseudoline arrangement. Assign to each pseudoline $l_a$ the indeterminate $x_a$. Then the Varchenko matrix of $\mathcal{A}$ has a block diagonal form over $\mathbb{Z}[x_1,\ldots,x_n]$. The block entries of size greater than one are precisely the leftover matrices of degenerate intersection points, and the non-degenerate elements of the intersection poset are in bijection with the rest of the diagonal entries, which have the form $$\prod_{a \in B}(1-x_a^2)$$ ranging over all $B\subseteq \{1,\ldots,n\}$ such that $\bigcap_{a\in B} l_a$ is an element of the intersection poset $L(\mathcal{A})$.
\end{theorem}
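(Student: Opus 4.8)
The plan is to bootstrap from Theorem \ref{gen}. When $\mathcal{A}$ is in semigeneral position it has no degenerate intersection points, every element of $L(\mathcal{A})$ is non-degenerate, and Theorem \ref{gen} already exhibits a full diagonal form with entries $\prod_{a\in B}(1-x_a^2)$; this is precisely the degeneracy-free base case in which all blocks have size one, so it suffices to understand what the diagonalization procedure does near a point where three or more pseudolines are concurrent. I would therefore re-run the row and column reduction underlying Theorem \ref{gen}, but organize the operations according to a linear extension of the intersection poset $L(\mathcal{A})$, processing the rank-one flats (the pseudolines) before the rank-two flats (the intersection points), and tracking exactly which moves become unavailable at a degenerate point.

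The engine of the reduction is the separation structure of $V$: the $(R,R')$ entry is the product of $x_a$ over the pseudolines $l_a$ separating the chambers $R$ and $R'$, so two chambers adjacent across a single pseudoline $l_a$ have row (resp.\ column) vectors differing by the scalar $x_a$. First I would check that the moves used in Theorem \ref{gen} — replacing the row of a chamber by its row minus $x_a$ times the row of its neighbor across $l_a$ — are unimodular over $\mathbb{Z}[x_1,\dots,x_n]$ and each clear a factor $(1-x_a^2)$, and that in the absence of degeneracy these moves can be sequenced so that each flat $H_B\in L(\mathcal{A})$ contributes exactly one diagonal entry $\prod_{a\in B}(1-x_a^2)$. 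The content of the theorem is that this sequencing is unaffected at every non-degenerate flat, so those flats still yield the stated single diagonal entries, and whatever obstruction exists is purely local to the degenerate points.

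The core step is the local analysis at a degenerate point $p$ through which the pseudolines $\{l_a : a\in B_p\}$, with $|B_p|=n_p\ge 3$, are concurrent. Locally the arrangement is a pencil of $n_p$ pseudolines dividing a neighborhood of $p$ into $2n_p$ chambers, and I would isolate the principal submatrix of $V$ (after the rank-one reductions) recording the pairwise separations among these chambers. Using the cyclic order in which the pseudolines pass through $p$, one computes that the available unimodular moves reduce this submatrix to a direct sum of the diagonal entries coming from the individual pseudolines through $p$ and a single residual coupled block; a rank count on the adjacent-chamber difference relations pins the size of that block at $n_p-1$. Its entries are then read off combinatorially from the separation sets of the remaining coupled chambers, giving the leftover matrix, while the non-degenerate double points stay uncoupled and contribute $(1-x_a^2)(1-x_b^2)$ exactly as before.

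The main obstacle I anticipate is twofold. First, I must verify that the reductions associated with distinct flats commute, or can be scheduled so as not to interfere — that is, clearing a factor at one crossing must not spoil the row and column vectors feeding into another crossing — so that the global form really is the block-diagonal sum of these local contributions; this is where a careful choice of linear extension of $L(\mathcal{A})$, combined with an inductive delete-a-chamber/delete-a-pseudoline argument, will be needed. Second, I must prove that the residual block at a degenerate point cannot be split further over $\mathbb{Z}[x_1,\dots,x_n]$ and genuinely has size $n_p-1$; I expect to pin this down by computing the determinant of the leftover matrix and matching the total against the Hochst{\"a}ttler--Welker determinant formula \cite{hochstattler2018varchenko}, which forces the block sizes to be exactly $n_p-1$ and simultaneously yields the promised alternative combinatorial proof of the two-dimensional Varchenko determinant formula.
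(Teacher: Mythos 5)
Your overall shape is right --- process the non-degenerate flats as in the semigeneral case and isolate a residual block at each point of concurrency --- but the two steps you yourself flag as the main obstacles are exactly where the paper's real work lies, and your proposal does not supply a mechanism for either. For the scheduling problem, the paper does not work with an abstract linear extension of $L(\mathcal{A})$: it passes to a wiring diagram (Goodman--Pollack), takes the degenerate point $M$ with largest $x$-coordinate, and constructs an explicit piecewise-linear border through $M$ built from segments of the pseudolines, such that every pseudoline crosses the border at most once, one side is semigeneral, and that side contains exactly $n-1$ regions having $M$ as a vertex (Lemma~\ref{splittinglemma}); the theorem then follows by induction on the number of degenerate points, with Theorem~\ref{theorem1degeneracy} as the base case. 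Saying that ``a careful choice of linear extension, combined with an inductive delete-a-chamber/delete-a-pseudoline argument, will be needed'' names the difficulty without resolving it. Moreover, your local analysis at a degenerate point is not actually local: after the earlier eliminations, the entries of the residual block carry a correction factor $\phi\bigl(\prod_{i}(1-l_i^2(\cdot,\cdot))\bigr)$ coming from \emph{all} previously indexed regions, including ones far from $p$. Showing that these corrections collapse to exactly the entries of the leftover matrix of the $n$-basic pencil is the content of Lemma~\ref{lemma:7} (the $\phi$-cancellation identity) and Lemma~\ref{lemma:8} (regions in the same cone through the degeneracy contribute identically); reading the entries ``off the separation sets of the remaining coupled chambers'' skips this entirely.

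The second gap is the proposed use of the Hochst\"attler--Welker determinant formula to pin down the block sizes. Knowing the determinant of $V$ cannot force a block decomposition to have blocks of size $n_p-1$, nor can it identify their entries, so this step is logically insufficient; and since the paper derives the two-dimensional determinant formula as a corollary of the block diagonal form, invoking that formula to establish the form would make the ``alternative combinatorial proof'' circular. Note also that Theorem~\ref{theorem45} does not assert minimality or indecomposability of the blocks --- it exhibits a specific block diagonal form --- so the indecomposability question you raise need not be answered, but the sizes and entries must be computed directly, as the paper does for the $n$-basic arrangement in Subsection~\ref{subsection:1} and then transports to the general case via the splitting.
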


\begin{figure}
    \centering
    \includegraphics[scale=0.7]{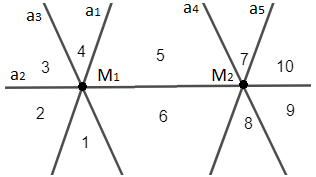}
    \caption{Arrangement with 2 degenerate points.}
    \label{fig:2deg}
\end{figure}

\

For example, the Varchenko matrix of the arrangement in Figure~\ref{fig:2deg} has the following block diagonal form. The $2\times 2$ matrices on the main diagonal come from the two degenerate intersections $M_1, M_2$ respectively, each of which are formed by 3 pseudolines.
\begin{equation*}
\begin{smallmatrix}
1&0&0&0&0&0&0&0&0&0\\
0&(1-a_1^2)&0&0&0&0&0&0&0&0\\
0&0&(1-a_2^2)&0&0&0&0&0&0&0\\
0&0&0&(1-a_3^2)&0&0&0&0&0&0\\
0&0&0&0&(1-a_1^2)(1-a_2^2a_3^2)&a_2(1-a_1^2)(1-a_3^2)&0&0&0&0\\
0&0&0&0&a_2(1-a_1^2)(1-a_3^2)&(1-a_3^2)(1-a_1^2a_2^2)&0&0&0&0\\
0&0&0&0&0&0&(1-a_4^2)&0&0&0\\
0&0&0&0&0&0&0&(1-a_5^2)&0&0\\
0&0&0&0&0&0&0&0&(1-a_4^2)(1-a_2^2a_5^2)&a_2(1-a_4^2)(1-a_5^2)\\
0&0&0&0&0&0&0&0&a_2(1-a_4^2)(1-a_5^2)&(1-a_2^2a_4^2)(1-a_5^2)
\end{smallmatrix}
\end{equation*}

This result also serves as an alternative elementary proof of the Varchenko matrix determinant formula for pseudoline arrangements.

\

The organization of this paper is as follows. The basics of hyperplane arrangements, oriented matroids, and the topological representation theorem are outlined in Section \ref{section 2}. In Section \ref{section 3}, we generalize the theorem of Gao and Zhang in \cite{gao2018diagonal} regarding the existence of a diagonal form of the Varchenko matrix of an oriented matroid. Then we consider the "next best thing", i.e., the block diagonal form for the Varchenko matrices of of pseudoline arrangements that have degenerate intersections. In Section \ref{section 4}, we compute a block diagonal form for the Varchenko matrix of a pseudoline arrangement with one degenerate point formed by $n$ pseudolines by partitioning the arrangement into two subarrangements and "resolving" the degenerate point. We generalize the method in Section \ref{section 5} and construct a block diagonal form for pesudoline arrangements with arbitrary degeneracies.

\section{Preliminaries}\label{section 2}
In this section, we go over the basic theories of hyperplane arrangements and oriented matroids, as well as the topological representation theorem of oriented matroids.

\subsection{Hyperplane arrangements}
A \textit{hyperplane arrangement} is a finite set of  hyperplanes in $\mathbb{R}^d$. Given an arrangement $\mathcal{A}$ with $n$ hyperplanes, we assign the indeterminate $x_a$ for each hyperplane $H_a$. For any subset $B \subseteq \{1,2,\ldots,n\}$, denote by $H_B$ the intersection $\bigcap_{a \in B} H_a$.

Let $L(\mathcal{A})$ be the set of all
nonempty intersections of hyperplanes in $\mathcal{A}$, including $\mathbb{R}^d$ itself as the intersection over the empty set. Define $x \leqslant y$ in $L(A)$ if $x \supseteq	 y$ (as subsets of $\mathbb{R}^d$). In other words, $L(\mathcal{A})$ is partially ordered by reverse inclusion. We call $L(\mathcal{A})$ the \textit{intersection poset}
of A. 
  
We define a \textit{region} $R$ of $\mathcal{A}$ to be a connected component of the complement of the union of all hyperplanes $\bigcup H_a$ in $\mathbb{R}^d$. Denote by $\mathcal{R}(\mathcal{A})$ the set of regions of $\mathcal{A}$ and $r(\mathcal{A})$ the number of regions. We will say that a hyperplane \textit{separates} two regions $R_m$ and $R_n$ if the regions are in different subspaces upon hyperplane. Denote $sep(R_m,R_n):=\{H_a \in \mathcal{A}:\ H_a$ separates $R_m$ and $R_n\}$.

We say that an arrangement $\mathcal{A}$ is a \textit{general} arrangement in $\mathbb{R}^d$ if for any subset $B \subseteq \{1,2,\ldots,n\}$, the cardinality $|B| \leqslant d$ implies that $\dim(H_B)=d-|B|$, while $|B|>d$ implies that $H_B=\emptyset$.
If for all $B \subseteq \{1,2,\ldots,n\}$ with $H_B \neq \emptyset$, we have $\dim(H_B)=d-|B|$, then $\mathcal{A}$ is called \textit{semigeneral} in $\mathbb{R}^d$.

The \textit{Varchenko matrix} $V(\mathcal{A})=[V_{ij}]$ of a hyperplane arrangement $\mathcal{A}$ is the $r(\mathcal{A})\times r(\mathcal{A})$ matrix with rows and columns indexed by $\mathcal{R}(\mathcal{A})$ and entries
\begin{equation*}
    V_{ij}=\prod_{H_a \in sep(R_i, R_j)}^{} x_a.
\end{equation*}

\subsection{Oriented matroids}

Next, we review some definitions and constructions from the theory of oriented matroids.

\begin{definition}
A \textit{signed set} $X$ is a set $X$ together with a partition $(X^+, X^-)$ of $X$ into two
disjoint subsets: $X^+$, the set of positive elements of $X$, and $X^-$, its set of
negative elements.  
\end{definition}
The set $\underline{X} = X^+ \cup X^-$ is called the \textit{support} of X.

\begin{definition}(cf. \cite[3.2.1]{bjorner1999oriented})
An \textit{oriented matroid} is a pair of sets $(E,\mathcal{C})$, where $\mathcal{C}$ is the set of some signed subsets of $E$, called \textit{signed circuits}, such that
\begin{enumerate}
    \item $\emptyset \notin	\mathcal{C}$,
    \item if $X \in \mathcal{C}$, then $-X \in \mathcal{C}$,
    \item for all $X, Y \in \mathcal{C}$, if $\underline{X}	\subseteq \underline{Y}$, then $X=Y$ or $X=-Y$,
    \item for all $X, Y \in \mathcal{C}$ such that $X \neq -Y$, and $e \in X^+ \cap Y^-$ there is a $Z \in \mathcal{C}$ such that
$Z^+ \subseteq (X^+ \cup Y^+)\backslash\{e\}$ and
$Z^- \subseteq (X^- \cup Y^-)\backslash\{e\}$.
\end{enumerate}
\end{definition}

If a signed circuit $e\in E$ satisfies $(e,\emptyset)\in\mathcal{C}$, then it is called a \textit{loop}.

\

In order to define the Varchenko matrix of an oriented matroid, it is convenient to give an equivalent definition via \textit{topes}. (cf. \cite[3.8.3]{bjorner1999oriented}.)
\begin{definition}\label{deftopes}
An \textit{oriented matroid} is a pair of sets $(E,\mathcal{T})$, where $\mathcal{T}$ is the set of some signed subsets of $E$, called \textit{topes}, such that
\begin{enumerate}
    \item $\mathcal{T} \neq \emptyset$, and for all $X, Y \in \mathcal{T}$ we have $\underline{X}=\underline{Y}$,
    \item if $X \in \mathcal{T}$, then $-X \in \mathcal{T}$,
\item for all $A, B \subseteq E$ with $A \cap B \neq \emptyset$ we have $(\mathcal{T}\backslash A)/B=(\mathcal{T}\backslash B)/A$, where $A/B$ denotes the contraction of $A$ to $B$.
\end{enumerate}
\end{definition}

All loops of an oriented matroid form a \textit{zero set} $E_0$ such that for every $X \in \mathcal{T}$ $\underline{X}\cap E_0=\emptyset$.

For two topes $P$ and $Q$ we define the separation set $Sep(P,Q)=\{e \in E | P_e=-Q_e \neq 0\}$. Now we are ready to define the Varchenko matrix for oriented matroids.
\begin{definition}
For the oriented matroid $\mathcal{L}$ we assign to each $e \in E$ the indeterminate $x_e$. The \textit{Varchenko matrix} is a matrix $V=[V_{ij}]$ with the number of rows and columns equal to the number of topes of $\mathcal{L}$ such that for any two topes $T_i$ and $T_j$ (not necessarily distinct)
\begin{equation*}
V_{ij}=\prod_{e \in Sep(T_i,T_j)}^{} x_e.
\end{equation*}
\end{definition}

As we will see later (Theorem~\ref{lemma:trt}), loop-free oriented matroids are closely connected with configurations of topological objects called \textit{pseudospheres}. 

\begin{definition}
A subset $S$ of $S^d$ is called a \textit{pseudosphere} if $S = h(S^{d-1})$ for some homeomorphism
$h : S^d \rightarrow S^d$, where $S^{d-1}$ is a $(d-1)$-dimensional sphere and a subset of $S^d$. A pseudosphere $S$ has two sides $S^+$ and $S^-$, called hemispheres.
\end{definition}

\begin{definition}
An \textit{arrangement of pseudospheres} $\mathcal{A}$ is a finite set of pseudospheres $\{S_e |e\in \{1,2,...\}$) in the unit sphere $S^d$ such that every non-empty intersection of a subset of pseudosheres is a pseudosphere. Furthermore, for $e \in E$ and $A	\subset E$, such that $S_A \nsubseteq S_e$, the intersection $S_A \cap S_e$ is a pseudosphere with sides $S_A \cap S_e^+$ and $S_A \cap S_e^-$ and every pseudosphere is symmetric about the center of $S^d$. 
\end{definition}
The intersection poset of a pseudospehere arrangement is defined similarly to the intersection poset of a hyperplane arrangement. 
If $\mathcal{A}$ is a signed arrangement of pseudospheres, let $\mathcal{C}(\mathcal{A})$ be the set of signs of the regions.

Similarly, we  can define pseudohyperplanes as well.
A subspace $R$ of $\mathbb{R}^d$ will be called a \textit{pseudohyperplane} if $R = h(\mathbb{R}^{d-1})$ for some homeomorphism
$h : \mathbb{R}^d \rightarrow \mathbb{R}^d$, where $\mathbb{R}^{d-1}$ is a subset of $\mathbb{R}^d$. A pseudohyperplane $R$ has two sides  $R^+$ and $R^-$.
 A pseudohyperplane arrangement and its Varchenko matrix is defined similarly as in the case of hyperplane arrangements.

\subsection{Topological Representation of Oriented Matroids}
The following theorem by Folkman and Lawrence gives us a concrete description of all loop-free oriented matroids in terms of pseudosphere arrangements. See Bj{\"o}rner et al. \cite{bjorner1999oriented} for instance.
\begin{theorem}\label{lemma:trt}
\begin{enumerate}[(a)]
    \item If $\mathcal{A}$ is a signed arrangement of pseudospheres in $S^d$, then $\mathcal{C}(A)$ is a set of circuits of some loop-free oriented matroid.
    \item If $(E,\mathcal{C})$ is a loop-free oriented matroid, then there is a pseudosphere arrangement $\mathcal{A}$ with $\mathcal{C}(A)=\mathcal{C}$ in $S^d$ for some $d$.
    \item $\mathcal{C}(A) = \mathcal{C}(A')$ for two signed arrangements $A$ and $A'$ in $S^d$ if and only if
$A' = h(A)$ for some self-homeomorphism $h$ of $S^d$.
\end{enumerate}

\end{theorem}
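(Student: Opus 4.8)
The plan is to handle the three parts separately, with (a) and (c) reducing to bookkeeping once the topological heart of (b) is secured.

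For part (a) I would read the sign data directly off the geometry: a point of $S^d$ determines, for each $e$, whether it lies on $S_e$, in $S_e^+$, or in $S_e^-$, giving a sign vector, and the region signs are exactly these vectors on the open regions. I would then verify that this collection satisfies the covector axioms (equivalently, that its minimal-support duals satisfy the circuit axioms) of a loop-free oriented matroid. Freeness from loops is clear since no $S_e$ is empty, and the symmetry and support axioms are immediate from the antipodal symmetry $S_e^+\leftrightarrow S_e^-$ and from the definition of support. For the composition and elimination axioms I would argue geometrically: two regions can be joined by a generic path, the sign changes occur precisely when the path crosses a pseudosphere, and composing signs along such a path while examining the last crossing of $S_e$ produces the eliminated sign vector. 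The only point requiring care is that every relevant intersection $S_A$ is itself a pseudosphere, which the arrangement axioms guarantee.

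For part (b), the substantive direction, I would realize the covector data of $\mathcal{M}$ as the face poset of a regular CW decomposition of a sphere and then extract the arrangement. Let $\mathcal{L}^{*}$ be the covectors of $\mathcal{M}$, ordered by the product order generated by $0<+$ and $0<-$, and let $\mathcal{F}_{\mathrm{big}}$ be the big face lattice obtained by adjoining a top element. The goal is to show $\mathcal{F}_{\mathrm{big}}$ is the augmented face poset of a regular CW sphere $\Delta\cong S^{r-1}$, where $r$ is the rank; this recognition step is exactly the \emph{main obstacle}. My plan is threefold: first, show $\mathcal{F}_{\mathrm{big}}$ is graded and thin, i.e. every length-two interval is a diamond with precisely two elements in its interior, which follows from the covector axioms; second, establish a recursive coatom ordering of $\mathcal{F}_{\mathrm{big}}$, hence shellability, proving this by induction on $|E|$ and using that the covector posets of the deletion $\mathcal{M}\setminus f$ and contraction $\mathcal{M}/f$ occur as the required subposets; third, invoke Bj\"orner's theorem that a thin, shellable, graded poset is the augmented face poset of a regular CW sphere. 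With $\Delta\cong S^{r-1}$ in hand, for each non-loop $e$ I would set $S_e$ to be the subcomplex of cells whose $e$-coordinate is $0$; thinness together with the structure of the covectors forces each $S_e$, and each intersection $S_A$, to be a pseudosphere, producing an arrangement $\mathcal{A}$ with $\mathcal{C}(\mathcal{A})=\mathcal{C}$.

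For part (c) I would use that a finite regular CW complex is determined up to cellular homeomorphism by its face poset. If $\mathcal{C}(A)=\mathcal{C}(A')$, the two arrangements have identical covector posets, hence isomorphic augmented face lattices, so by the previous fact the underlying CW spheres are carried onto one another by a self-homeomorphism $h$ of $S^d$; a final check shows $h$ may be chosen to respect the antipodal symmetry and the sign labelling of each pseudosphere, giving $A'=h(A)$. The converse is immediate, since a self-homeomorphism carries regions to regions and preserves their incidences with the images of the pseudospheres and their sides, so $\mathcal{C}(h(A))=\mathcal{C}(A)$. As in part (b), the delicate point is compatibility with the orientations rather than merely with the unoriented cell structure.
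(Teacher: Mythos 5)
The paper does not actually prove this statement: it is the Topological Representation Theorem of Folkman and Lawrence, quoted as background with an explicit pointer to Bj\"orner et al.\ \cite{bjorner1999oriented}, and everything later in the paper uses it only as a black box. So there is no internal proof to compare yours against. What you have written is, in outline, the standard Edmonds--Mandel-style proof as presented in that reference: for (a), reading sign vectors off the cells and verifying the axioms along generic paths crossing the pseudospheres; for (b), recognizing the big face lattice of the covectors as the augmented face poset of a shellable regular CW sphere (thinness, recursive coatom ordering, Bj\"orner's recognition theorem for thin shellable posets) and then taking $S_e$ to be the subcomplex of cells whose $e$-th coordinate vanishes; for (c), the fact that a finite regular CW complex is determined up to cellular homeomorphism by its face poset, refined to respect labels and signs. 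That is the correct architecture for this theorem.

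Three caveats, in decreasing order of weight. First, the recursive coatom ordering in step (b) is the real content of the theorem and is substantially harder than ``induction on $|E|$ using deletion and contraction'' suggests: in \cite{bjorner1999oriented} it is obtained through a delicate analysis of tope posets $\mathcal{T}(\mathcal{M},B)$ and shellings of their intervals, and a naive deletion/contraction induction does not go through, since coatom orderings of the minor posets do not simply assemble into one for $\mathcal{M}$. Second, thinness does not by itself force $S_e$ to be a pseudosphere; you need that shellable spheres are PL, so that each $S_e$ and each $S_A$ is a tamely embedded PL subsphere, and this PL input is also what makes the homeomorphism extension in (c) work. Third, note a mismatch in the paper's own wording: it defines $\mathcal{C}(\mathcal{A})$ as the signs of the regions, which are the topes (maximal covectors), yet the theorem calls them circuits; your covector-based reading is the standard and correct resolution of that ambiguity.
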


However, pseudospheres are not easy to work with, so we consider pseudohyperplanes instead. Specifically, we use the  stereographic projection from $S^d$ to $\mathbb{R}^d$  to obtain segments of pseudohyperplanes out of pseudohemispheres. Then we expand the pseudohyperplanes so that no new intersections are introduced, and we get a pseudohyperplane arrangement. 
Due to the symmetric property of pseudospheres, a pseudosphere arrangement is uniquely determined by the pseudohemisphere arrangement, that lies on the southern hemisphere of $S^d$. This preserves the intersection poset, thus we can consider pseudohyperplane arrangements instead of pseudosphere arrangements. Notice that the two definitions of Varchenko matrix for pseudohyperplane arrangements and oriented matroids agree with each other.

\

For an arbitrary oriented matroid $\mathcal{M}:=(E,\mathcal{T})$ with $\mathcal{T}$ the set of topes, the Topological Representation Theorem does not apply directly. Nonetheless, its Varchenko matrix can be understood via pseudohyperplane arrangements for the following reason. We consider the support $S$ of the topes. It is easy to verify that $\mathcal{M}':=(S,\mathcal{T})$ is a matroid by checking the axioms given in Definition~\ref{deftopes}. Moreover, $\mathcal{M}'$ is loop-free, because the zero set of its topes is empty. Finally, we notice that $\mathcal{M}$ and $\mathcal{M}'$ have the same Varchenko matrix because they have the same set of topes with the same separating sets. For the remainder of the paper, we assume that we are working with loop-free oriented matroids.

\section{Diagonal form of the Varchenko matrix of a semigeneral pseudohyperplane arrangement} \label{section 3}

In \cite{gao2018diagonal}, Gao and Zhang proved that the Varchenko matrix of a hyperplane arrangement $\mathcal{A}$ is diagonalizble over the integer polynomial ring if and only if $\mathcal{A}$ is semigeneral. This result and its proof can be easily generalized to pseudohyperplane arrangements and thus to oriented matroids.

\begin{theorem}\label{lemma:the}
Consider a pseudohyperplane arrangement $\mathcal{A}=\{H_1,...,H_n\}$ in $\mathbb{R}^d$. Assign an indeterminate $x_a$ to each pseudohyperplane $H_a$, where $a \in I=\{1,...,n\}$. Then the Varchenko matrix associated with $\mathcal{A}$ is diagonalizable over $\mathbb{Z}[x_1,x_2,...,x_n]$ if and only if $\mathcal{A}$ is in semigeneral position. In that case, the diagonal entries of the diagonal form of $V$ are equal to $$\prod_{a \in B}^{} (1-x_a)$$ ranging over all $B \subseteq I$ such that $H_B \in L(\mathcal{A})$.
\end{theorem}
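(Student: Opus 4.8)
The plan is to transcribe the argument of Gao and Zhang \cite{gao2018diagonal} into the pseudohyperplane setting, the key point being that their proof never uses the linearity of the hyperplanes: every operation is expressed through the regions, their separation sets, and the intersection poset, all of which are supplied for a pseudohyperplane arrangement by the topological representation theorem (Theorem~\ref{lemma:trt}). So the real task is a verification of transfer rather than a new idea, and the two implications are handled separately.

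For the forward implication I would induct on the number $n$ of pseudohyperplanes. Fix one pseudohyperplane $H_n$ and form the deletion $\mathcal{A}'=\mathcal{A}\setminus\{H_n\}$ and the restriction $\mathcal{A}''=\mathcal{A}^{H_n}$; both are again pseudohyperplane arrangements and both inherit semigenerality from $\mathcal{A}$, since a flat of $\mathcal{A}'$ is a flat of $\mathcal{A}$ not using $H_n$ and a flat of $\mathcal{A}''$ drops in codimension by exactly one relative to the corresponding flat of $\mathcal{A}$ through $H_n$. The engine is the elementary relation that if two regions lie on opposite sides of $H_n$ but agree on every other pseudohyperplane, then their separation sets differ by exactly $\{n\}$, so crossing $H_n$ multiplies the corresponding entry by $x_n$. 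Using this, one performs the sequence of elementary row and column operations of \cite{gao2018diagonal}, all defined purely in terms of separation sets, to realize the equivalence
\[
V(\mathcal{A})\ \sim\ \begin{pmatrix} V(\mathcal{A}') & 0\\ 0 & (1-x_n^2)\,V(\mathcal{A}'') \end{pmatrix}.
\]
The inductive hypothesis diagonalizes $V(\mathcal{A}')$ and $V(\mathcal{A}'')$; the factor $(1-x_n^2)$ promotes each index set $B''$ of a flat of $\mathcal{A}''$ to the flat $B''\cup\{n\}$ of $\mathcal{A}$, and the two families of diagonal entries assemble into the claimed products ranging over all $B$ with $H_B\in L(\mathcal{A})$. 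Semigenerality is used precisely to guarantee that $\mathcal{A}''$ again has no coincidences, so that the recursion terminates in $1\times 1$ blocks.

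For the converse I would argue contrapositively. If $\mathcal{A}$ is not semigeneral there is a flat $H_B\in L(\mathcal{A})$ with $\dim H_B>d-|B|$, i.e.\ more pseudohyperplanes meet along $H_B$ than expected; routing the reduction above through this flat, the restriction acquires a coincidence, the recursion no longer closes, and an indecomposable block remains, a \emph{leftover matrix} in the sense of Theorem~\ref{theorem45}. The goal is to certify that such a block is genuinely non-diagonalizable over $\mathbb{Z}[x_1,\ldots,x_n]$, equivalently that its cokernel is not a finite direct sum of cyclic modules. Here I would compute the relevant determinantal and Fitting ideals of the leftover block, which are invariant under equivalence, and show they cannot match those of any diagonal matrix; a convenient shortcut is to apply a ring homomorphism specializing the indeterminates, under which diagonalizability is preserved, and to land on a matrix whose non-diagonalizability over a simpler ring is classical, recalling that $\mathbb{Z}[x_1,\ldots,x_n]$ is not an elementary divisor ring, so such obstructions exist.

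The main obstacle is this converse, in two respects. First, one must rule out that the local obstruction is absorbed by the surrounding regions: because the block reduction is an honest equivalence, $\operatorname{coker}(V)$ splits off the leftover block as a direct summand, and one must argue through the Fitting ideals of $V$, which decompose as sums of products over a direct sum, that the presence of this summand forces some Fitting ideal of $V$ into a pattern impossible for any diagonal matrix. Second, and this is where the pseudohyperplane generalization must actually be checked rather than assumed, one must verify that the topological representation produces, near a degenerate flat $H_B$, a local arrangement combinatorially identical to the linear degeneracy treated in \cite{gao2018diagonal}, so that the same ideal computation applies verbatim. Once this local combinatorial equivalence is in place, the forward direction is routine elementary-operation bookkeeping, and the theorem follows.
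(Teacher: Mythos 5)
Your forward direction rests on the matrix equivalence
\[
V(\mathcal{A})\ \sim\ \begin{pmatrix} V(\mathcal{A}\setminus\{H_n\}) & 0\\ 0 & (1-x_n^2)\,V(\mathcal{A}^{H_n}) \end{pmatrix},
\]
which you claim follows from ``crossing $H_n$ multiplies the entry by $x_n$'' together with elementary row and column operations. This is the gap: the operations you describe do not produce that block form. Pairing each region $R$ of the deletion that is cut by $H_n$ into $(R^+,R^-)$ and subtracting $x_n$ times the $R^+$ row from the $R^-$ row does kill the entries in columns indexed by regions on the positive side of $H_n$, but it leaves entries $(1-x_n^2)V_{R^-,S}\neq 0$ in every column indexed by an \emph{uncut} region $S$ on the negative side, and the subsequent column operations do not remove them. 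Already for three points $H_1,H_2,H_3$ on a line with $H_n=H_2$, the regions $R_1\mid R_2\mid R_3\mid R_4$ give, after your operations on the pair $(R_3,R_2)$, a matrix with the nonzero off-block entry $x_1(1-x_2^2)$ in the $(R_1,R_2)$ position, and with $x_1x_2$ rather than $x_1$ in the $(R_1,R_3)$ position of the would-be $V(\mathcal{A}')$ block. The deletion--restriction equivalence is in fact \emph{true} for semigeneral arrangements, but only as a corollary of the theorem (both sides are equivalent to the same diagonal matrix); establishing it directly is essentially as hard as the theorem itself, so it cannot serve as the engine of the induction without a genuinely new argument. This is also not what Gao and Zhang do: their operations are not deletion--restriction operations but a single global Gaussian elimination, one region at a time, along a carefully constructed ordering of the regions.

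The paper's proof is organized around exactly that ordering (Lemma~\ref{lemma:5}): one numbers the regions so that each new region encompasses exactly one new element of $L(\mathcal{A})$, and then controls all intermediate entries of the elimination combinatorially (Lemma~\ref{lemma:6}). The part of the generalization that actually requires checking in the pseudohyperplane setting is precisely this numbering lemma --- its proof is local and uses that a small neighborhood of the new region is stretchable to a genuine hyperplane arrangement --- and your proposal bypasses it entirely, so the one piece of the transfer that is not automatic is never addressed. For the converse, your plan (Fitting ideals of the leftover block, or specialization to a simpler ring) is in the right spirit and close to what Gao and Zhang do, but as written it is a program rather than an argument: no determinantal ideal is computed and no specialization is exhibited, so the non-diagonalizability of a degenerate arrangement is not actually certified. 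To repair the proposal, replace the deletion--restriction induction by the region-ordering elimination (or else supply a direct proof of the block equivalence above), and carry out the minor computation for at least one leftover block in the converse.
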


Note that this is not a trivial generalization, because there are pseudohyperplane arrangements which are not homeomorphic to hyperplane arrangements, such as Figure~\ref{fig:my_label1},  described in \cite[Chapter 5]{felsner2017pseudoline}. 
\begin{figure}[h]
    \centering
    \includegraphics[scale=0.5]{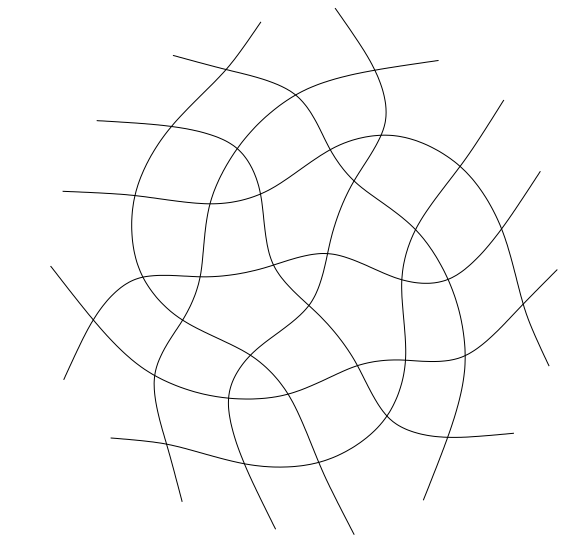}
    \caption{Non-stretchable pseudoline arrangement in $\mathbb{R}^2$.}
    \label{fig:my_label1}
\end{figure}

\

The proof of the "if" part of Theorem~\ref{lemma:the} consists of two steps: first, we prove the existence of a numbering of the regions such that every time we index a new region, we encompass exactly one element in the intersection poset. The second step is to find a combinatorial interpretation of the entries after certain row and column operations, and use it to show that we can reduce the matrix to its diagonal form. Now we want to check that the same proof works for a semigeneral pseudohyperplane arrangement.

\begin{definition}
\begin{enumerate}
    \item A set of regions $\mathcal{B} \subset \mathcal{R}(\mathcal{A})$ \textit{encompasses} a point $x \in \mathbb{R}^d$ if the interior of the closure of the union of these regions contains $x$.
    \item A set of regions $\mathcal{B} \subset \mathcal{R}(\mathcal{A})$ \textit{encompasses} an element $M\in L(\mathcal{A})$ if there exists a point $x\in M$ such that $\mathcal{B}$ encompasses $x$.
\end{enumerate}
\end{definition}

The next lemma asserts the existence of a "good" numbering of the regions of the arrangement with certain properties.

\begin{lemma}\label{lemma:5}

Consider a pseudohyperplane arrangement $\mathcal{A}$ with $s$ regions. Then we can number the regions by $I=\{1,2,...,s\}$ in increasing order such that any time we index a new region, we encompass exactly one new element of the intersection poset and the following properties remain true for all $n	\in I$:
\begin{enumerate}[(a)]
    \item The  interior of the closure of $\bigcup_{1\leqslant	m\leqslant	n}R_m$ is connected.
    \item For all $M \in L(\mathcal{A})$, the subset $\{x:x \in M, x \hspace{5 pt} is\hspace{5 pt} already\hspace{5 pt} encompassed\}\subseteq	M$ is connected.
    \item If $R_n$ is first to encompass $M=H_B$, where $B \subseteq I$, then $R_n$ is the first indexed region in the cone formed by all $H_a$, $a \in B$ that contains $R_n$.
\end{enumerate}
\end{lemma}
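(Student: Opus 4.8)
The plan is to construct the numbering greedily, one region at a time, maintaining all of the asserted properties as a single inductive invariant. Write $U_n$ for the interior of the closure of $\bigcup_{1\leqslant m\leqslant n}R_m$. For the base case I would let $R_1$ be an arbitrary region; since the interior of the closure of a single open region is the region itself, $U_1=R_1$ meets no pseudohyperplane, so the only element of $L(\mathcal{A})$ it encompasses is the top element $\mathbb{R}^d=H_\emptyset$, and properties (a)--(c) hold trivially. In the inductive step, assuming $R_1,\ldots,R_{n-1}$ have been numbered so that the invariant holds, I would select $R_n$ among the not-yet-indexed regions whose closure shares a facet with $U_{n-1}$, chosen so that adjoining it encompasses exactly one new element of $L(\mathcal{A})$.

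The core of the argument is the existence of such a region together with the verification that it closes off exactly one flat. Here I would use semigeneral position together with the topological representation theorem (Theorem~\ref{lemma:trt}): near any flat $M=H_B$ the arrangement looks like $|B|$ pseudohyperplanes crossing transversally, so a point of $M$ lies in the interior of the closure of $\bigcup R_m$ precisely when all $2^{|B|}$ local cones cut out by $\{H_a:a\in B\}$ around that point already contain indexed regions. Thus a flat $M$ becomes newly encompassed exactly when the last of these local cones first receives an indexed region. I would choose $R_n$ to be a region on the frontier of $U_{n-1}$ that completes such a local configuration for a single, minimal flat $M$; semigeneral position guarantees that distinct flats meet the closure of $R_n$ transversally and independently, so that a careful choice closes exactly one of them rather than several at once.

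To keep property (a) I always take $R_n$ adjacent to $U_{n-1}$ across a facet, so the encompassed blob stays connected. Property (b), the connectedness of the encompassed part of each flat $M$, is maintained by insisting that the new region be adjacent to the already-encompassed portion of $M$ whenever it touches $M$ at all; the inductive hypothesis that the encompassed part of every flat was connected, combined with the local transversality supplied by semigeneral position, prevents the encompassed part of $M$ from splitting into two pieces. Property (c) then follows directly from the construction: if $R_n$ is the first region to encompass $M=H_B$, then by the previous paragraph the cone of the subarrangement $\{H_a:a\in B\}$ that contains $R_n$ is the last such cone to be filled, and $R_n$ is by choice the first indexed region lying in it.

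I expect the main obstacle to be proving that a valid $R_n$ always exists, i.e.\ that at every stage one can simultaneously close exactly one new flat, preserve the connectivity of the blob, and preserve the connectivity of every encompassed flat, without ever getting stuck. Unlike the genuine hyperplane case, we cannot appeal to a generic linear functional or to coordinates, since pseudohyperplane arrangements need not be stretchable (Figure~\ref{fig:my_label1}); so this existence must be extracted purely from the oriented-matroid and pseudosphere structure, for instance by sweeping the arrangement along an allowable sequence of topes or by a shelling-type order of the associated cell complex, and then checking that the induced region order meets all three requirements. Making such a sweep compatible with the \emph{exactly one new flat} bookkeeping is the delicate point, and is precisely where the semigeneral hypothesis is indispensable.
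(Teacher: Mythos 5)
Your outline follows the same greedy/inductive skeleton as the paper (which simply ports Gao and Zhang's Lemma~5 for hyperplanes), but it stops short of a proof at exactly the point that carries all the content: you never establish that a valid $R_n$ exists at each step, i.e.\ a not-yet-indexed region adjacent to $U_{n-1}$ whose addition encompasses exactly one new element of $L(\mathcal{A})$ while preserving (a) and (b). You explicitly name this as ``the main obstacle'' and offer two candidate devices (a sweep along an allowable sequence of topes, a shelling of the associated cell complex) without carrying either out; but checking that such an order closes off exactly one flat per region is precisely the assertion of the lemma, so the proposal is circular where it matters. The observation that unlocks the problem --- and the one the paper actually relies on --- is that properties (a) and (b) are purely local: they concern only a small neighborhood of the closure of the newly added region, and any such neighborhood of a pseudohyperplane arrangement is homeomorphic to a genuine hyperplane arrangement. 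Hence the existence argument and the verification of (a) and (b) from the hyperplane case of Gao--Zhang transfer essentially verbatim, and no global sweep is needed; non-stretchability (Figure~\ref{fig:my_label1}) is a global phenomenon and never enters.

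Two smaller points. First, your derivation of (c) ``directly from the construction'' is too quick: completing the last \emph{local} cone at a point of $M$ only makes $R_n$ the first indexed region in a neighborhood of that point, whereas (c) asserts that $R_n$ is the first indexed region in the entire cone of the subarrangement $\{H_a : a\in B\}$ containing it; ruling out an earlier-indexed region elsewhere in that cone requires the connectivity statements (a) and (b), which is why the paper notes that property~3 ``relies solely on properties 1 and 2.'' Second, the conclusion that each new region encompasses exactly one new poset element forces $r(\mathcal{A})=|L(\mathcal{A})|$, which by Zaslavsky's theorem holds only in semigeneral position; so the semigenerality you invoke throughout is genuinely needed even though the statement of the lemma omits it.
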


The proof of this lemma uses an inductive approach similar to that of Gao and Zhang \cite[Lemma 5]{gao2018diagonal}, because properties 1 and 2 of the lemma concern only a small neighborhood of the new region, which is always stretchable to a hyperplane arrangement, while the proof of property 3 relies solely on properties 1 and 2.

\

For the second part of the proof, we define $V^{(k)}$ inductively in the following way: we set $V^{(0)}=V(\mathcal{A})$ and obtain $V^{(k)}$ from $V^{(k-1)}$ by eliminating the entries $V_{j,k-1}^{(k-1)}$, $j >k-1$ via subtracting multiples of the $k-1$-th row of $V^{(k-1)}$ from the rows below in $V^{(k-1)}$, and then eliminating the entries $V_{k-1,j}^{(k-1)}$, $j >k-1$ in an analogous way. The proof of Theorem~\ref{lemma:the} is now reduced to the following lemma, stated and proved by Gao and Zhang \cite{gao2018diagonal}.
\begin{lemma}\label{lemma:6}
The entries of the matrix $V^{(k)}$ are as follows:
\begin{enumerate}[(a)]
    \item The diagonal entry $V_{k,k}^{(k)}=\prod_{a \in x_k}^{} (1-x_a)^2$ for all $a$ such that $H_a$ passes through the element of the intersection poset that is newly encompassed by $R_k$;
    \item For all $m \neq n \leqslant k$, $V_{m,n}^{(k)}=0$; for all $m \leqslant k$ $V_{m,m}^{(k)}=V_{m,m}^{(m)}$ ;
        \item If at least one of $m$, $n$ is greater than $k$, then $V_{m,n}^{(k)}=V_{m,n} \cdot \phi(\prod_{i=1}^{k} (1-l_i^2(m,n))$ , where $\phi$ is the function on polynomial that replaces all exponents of indeterminates that are greater or equal to three by two, and $l_i(m,n)$ is the product of the weights of all pseudohyperplanes that separate simultaneously  $(R_i,R_m)$ and $(R_i,R_n)$.
\end{enumerate}
\end{lemma}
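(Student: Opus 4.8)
The plan is to prove the three assertions simultaneously by induction on $k$, viewing the passage from $V^{(k-1)}$ to $V^{(k)}$ as the single symmetric Schur-complement step that uses the $k$-th row and column as pivot, clearing the entries below and to the right of the $(k,k)$ position. The base case $V^{(0)}=V(\mathcal{A})$ is immediate, since the empty product in part (c) is $1$. For the inductive step I would first record that, because the $(m,k)$ entry is zeroed by the row operation before the symmetric column operation acts, the two operations affect an entry $V_{m,n}$ with $m,n>k$ exactly once, through the Schur update $V_{m,n}^{(k)}=V_{m,n}^{(k-1)}-V_{m,k}^{(k-1)}V_{k,n}^{(k-1)}/V_{k,k}^{(k-1)}$. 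Thus everything reduces to analysing this one rational expression and checking that it is in fact a polynomial of the asserted shape.

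The combinatorial backbone is the cocycle identity $Sep(R_m,R_n)=Sep(R_i,R_m)\,\triangle\,Sep(R_i,R_n)$, which holds for the topes of any oriented matroid because a pseudohyperplane separates $R_m$ from $R_n$ precisely when it separates exactly one of them from $R_i$. Since this is phrased entirely in terms of topes and separation sets, it transfers from genuine hyperplane arrangements to pseudohyperplane arrangements without any change, which is the principal reason Gao and Zhang's argument generalizes. Taking products of indeterminates over both sides yields the key relation $V_{i,m}V_{i,n}=V_{m,n}\,l_i(m,n)^2$, the engine that converts the off-diagonal Schur terms into factors of the form $1-l_i^2$. I would also note the self-consistency feeding part (b): when $i\in\{m,n\}$ one has $Sep(R_i,R_i)=\emptyset$, hence $l_i(m,n)=1$ and the factor $1-l_i^2(m,n)$ vanishes; consequently the formula in (c) automatically returns $0$ on every already-cleared "arm" with $\min(m,n)\leqslant k$, which is exactly what part (b) asserts.

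For the active block (part (c)) I would substitute the inductive formulas for $V_{m,n}^{(k-1)}$, $V_{m,k}^{(k-1)}$, $V_{k,n}^{(k-1)}$ and $V_{k,k}^{(k-1)}$ into the Schur update and use $V_{m,k}V_{k,n}=V_{m,n}\,l_k(m,n)^2$ to factor $V_{m,n}$ out of both terms. The claim is that the bracketed polynomial then equals $\phi\bigl(\prod_{i=1}^{k}(1-l_i^2(m,n))\bigr)$, i.e. that the single new factor $1-l_k^2(m,n)$ is produced. The delicate point, and the reason $\phi$ is indispensable, is that under $\phi$ a product of squared monomials $l_i^2$ collapses by \emph{union} of supports rather than by naive multiplication, since exponents $3$ and $4$ are clamped back to $2$; verifying that the numerator and denominator of the Schur term reorganize, after clamping, into precisely this telescoping factor is the main bookkeeping. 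Simultaneously one must check integrality, namely that $V_{k,k}^{(k-1)}$ divides the numerator in $\mathbb{Z}[x_1,\ldots,x_n]$ so that the step is realized by honest integral row and column operations; this becomes transparent once the pivot is known in closed form.

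That identification is part (a) and is the main obstacle. Specializing the Schur analysis to $m=n=k$ gives $V_{k,k}^{(k)}=V_{k,k}^{(k-1)}=\phi\bigl(\prod_{i=1}^{k-1}(1-V_{i,k}^2)\bigr)$, and the task is to show this collapses to $\prod_{a\in B}(1-x_a^2)$, where $M_k=H_B$ is the element of $L(\mathcal{A})$ newly encompassed by $R_k$. Expanding and grouping by support turns this into an inclusion–exclusion cancellation in which only the squarefree supports $S\subseteq B$ should survive, each with sign $(-1)^{|S|}$; making the surviving terms match requires knowing which already-indexed regions $R_i$ sit around $M_k$ and what they separate, which is exactly the content of Lemma~\ref{lemma:5}, most crucially property~(c) that $R_k$ is the first indexed region in the cone of the $H_a$, $a\in B$, containing it. Here I would invoke semigeneral position in its local form: near $M_k$ the arrangement of the $|B|$ pseudohyperplanes through $M_k$ is central and locally stretchable, hence homeomorphic to $|B|$ hyperplanes in general position, so the orthant bookkeeping and resulting cancellation are identical to those carried out by Gao and Zhang \cite{gao2018diagonal}. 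Establishing this collapse, and thereby pinning down the pivot, is both the crux of part (a) and what drives the reduction to diagonal form; the remaining verifications are formal consequences of the induction hypothesis.
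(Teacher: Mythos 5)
Your proposal is correct and takes essentially the same route as the paper, which gives no independent proof but simply observes that Gao and Zhang's inductive Schur-complement argument transfers because it uses only the separation-set combinatorics and Lemma~\ref{lemma:5}; your sketch (the cocycle identity $Sep(R_m,R_n)=Sep(R_i,R_m)\,\triangle\,Sep(R_i,R_n)$, the relation $V_{i,m}V_{i,n}=V_{m,n}l_i(m,n)^2$, and the local analysis near the newly encompassed flat) is precisely the mechanism being imported, spelled out in more detail than the paper provides.
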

The proof for the hyperplane case is applicable here because it does not use any global geometrical properties of hyperplanes and rely only on the Lemma~\ref{lemma:5}, so is the proof for the "only if" part of the theorem.

\section{Block diagonal form of arrangements with one degeneracy}\label{section 4}
In this section, we construct a block diagonal form of the Varchenko matrix associated to a pseudoline arrangement with one degeneracy. This can be viewed as a generalization of \ref{lemma:the}, since a diagonal form is not achievable for degenerate arrangements. It also provides an entirely elementary way to compute the determinant of the Varchenko matrix.

\subsection{The $n$-basic arrangements}\label{subsection:1}
First, we look at the simplest arrangement with one degeneracy, i.e., $n$ lines intersecting in one point. We call this arrangement the \textit{n-basic degeneracy arrangement}. We note that the $n$-basic degeneracy case can be generalized to the case where $n$ pseudohyperplanes intersect in exactly one subspace of codimension two using the same arguments.  

We apply the diagonalization operation, described in Section~\ref{section 3}, $n+1$ times, and we get an $(n-1)\times (n-1)$ symmetric submatrix in the lower-right. We call it the \textit{leftover matrix}, denoted by $L^n=[L_{i,j}^n]$.

By Lemma~\ref{lemma:6}, the first $n+1$ entries of the Varchenko matrix are exactly $1$, $1-x_1^2$, \ldots, $1-x_n^2$. Moreover, it gives us an explicit formula for the entries of the leftover matrix $L_{i,i+j}^n=x_{i+1}x_{i+2}\cdots x_{i+j}(1-x_1^2x_2^2\cdots x_i^2)(1-x_{i+j+1}^2x_{i+j+2}^2\cdots  x_n^2)$ for any $i \geq 1$ and $j\geq 0$.

\

Now we compute the determinant  $\det(L^n)$.
\begin{lemma}
The determinant of $L^n$ is equal to $(1-x_1^2)(1-x_2^2)\cdots (1-x_n^2)(1-x_1^2x_2^2\cdots x_n^2)^{n-2}$.
\end{lemma}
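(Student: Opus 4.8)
The plan is to recognize that $L^n$ is a \emph{single-pair} (or Green's) matrix and apply the classical determinant formula for such matrices. Concretely, working in the fraction field $\mathbb{Q}(x_1,\dots,x_n)$, set
\[
\alpha_i = \frac{1 - x_1^2\cdots x_i^2}{x_1\cdots x_i}, \qquad \beta_m = x_1\cdots x_m\,(1 - x_{m+1}^2\cdots x_n^2).
\]
A direct check shows that the given formula for $L^n_{i,i+j}$ is exactly $L^n_{i,m} = \alpha_{\min(i,m)}\,\beta_{\max(i,m)}$, where $m$ denotes the column index; the symmetry of $L^n$ makes the descriptions above and below the diagonal consistent. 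This is the structural observation that drives the whole computation.

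First I would establish the determinant formula for an arbitrary $N\times N$ single-pair matrix $A$ with $A_{ij}=\alpha_{\min(i,j)}\beta_{\max(i,j)}$, namely
\[
\det A = \alpha_1 \beta_N \prod_{k=1}^{N-1}\bigl(\alpha_{k+1}\beta_k - \alpha_k\beta_{k+1}\bigr).
\]
This follows from the elementary column operations $C_m \mapsto C_m - (\beta_m/\beta_{m-1})\,C_{m-1}$, carried out for $m=N,N-1,\dots,2$ (so that each $C_{m-1}$ is still the original column when used). These clear every entry strictly above the diagonal and render $A$ lower triangular; reading off the diagonal and telescoping the factors $\beta_m/\beta_{m-1}$ yields the displayed product. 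Since the operations take place over the fraction field while $\det L^n$ is a genuine polynomial in the $x_a$, no divisibility issue arises from the monomial denominators.

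The computational heart is the single factor $\alpha_{k+1}\beta_k - \alpha_k\beta_{k+1}$. Writing $p = x_1^2\cdots x_k^2$, $q = x_{k+2}^2\cdots x_n^2$, and $t = x_{k+1}^2$, one finds $\alpha_{k+1}\beta_k - \alpha_k\beta_{k+1} = \tfrac{1}{x_{k+1}}\bigl[(1-pt)(1-tq) - t(1-p)(1-q)\bigr]$, and the bracket factors as $(1-t)(1-pqt)$. Since $pqt = x_1^2\cdots x_n^2$, this gives $\alpha_{k+1}\beta_k - \alpha_k\beta_{k+1} = \tfrac{1}{x_{k+1}}(1-x_{k+1}^2)(1 - x_1^2\cdots x_n^2)$. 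Substituting into the single-pair formula with $N=n-1$, the prefactor $\alpha_1\beta_{n-1} = (1-x_1^2)(x_2\cdots x_{n-1})(1-x_n^2)$ absorbs the denominators $x_2\cdots x_{n-1}$ produced by the product, the factors $(1-x_2^2),\dots,(1-x_{n-1}^2)$ combine with $(1-x_1^2)(1-x_n^2)$, and the $n-2$ copies of $(1-x_1^2\cdots x_n^2)$ give the claimed power.

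The only genuine obstacle is the clean two-variable identity $(1-pt)(1-tq) - t(1-p)(1-q) = (1-t)(1-pqt)$, which is precisely what forces every factor in the product into the uniform shape $(1-x_{k+1}^2)(1 - x_1^2\cdots x_n^2)/x_{k+1}$ so that the telescoping succeeds; everything else is bookkeeping. I would also double-check the base cases $n=2$ (a $1\times1$ matrix with entry $(1-x_1^2)(1-x_2^2)$, matching the empty power of $1-x_1^2x_2^2$) and $n=3$ to confirm that the index ranges in the single-pair formula align with the $(n-1)\times(n-1)$ size of $L^n$.
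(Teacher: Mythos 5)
Your proof is correct, and it takes a genuinely different route from the paper. The paper proceeds by induction on $n$: a single row operation (subtracting $\tfrac{x_2(1-x_1^2)}{1-x_1^2x_2^2}$ times the second row from the first) clears the first row except for its leading entry, and the complementary minor is recognized as $L^{n-1}$ in the merged indeterminates $x_1x_2, x_3,\ldots,x_n$, so the determinant telescopes through the induction hypothesis. You instead identify the global structure $L^n_{i,m}=\alpha_{\min(i,m)}\beta_{\max(i,m)}$ (a single-pair/Green's matrix) and triangularize in one sweep of column operations, reducing everything to the general formula $\det A=\alpha_1\beta_N\prod_{k}(\alpha_{k+1}\beta_k-\alpha_k\beta_{k+1})$ plus the identity $(1-pt)(1-tq)-t(1-p)(1-q)=(1-t)(1-pqt)$; I checked both, and the bookkeeping with the denominators $x_2\cdots x_{n-1}$ cancels exactly as you say. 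The two arguments are close cousins (both are Gaussian elimination on the same matrix), but yours is non-inductive, exposes more structure --- each elimination step contributes the same factor $(1-x_{k+1}^2)(1-x_1^2\cdots x_n^2)/x_{k+1}$, which explains at a glance where the exponent $n-2$ comes from --- and generalizes to any single-pair matrix, at the mild cost of working in the fraction field (harmless here, as you note, since the determinant is a polynomial and the $\beta_m$ are nonzero rational functions). The paper's induction stays entirely inside $\mathbb{Z}[x_1,\ldots,x_n]$ up to one explicit rational coefficient and is more self-contained, but hides the self-similarity of $L^n$ inside the substitution $x_1\mapsto x_1x_2$.
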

\begin{proof}
We proceed by induction on $n$. 

Base case: $n=3$. We know that
   $$L^3=\begin{pmatrix}
(1-x_1^2)(1-x_2^2x_3^2) & x_2(1-x_1^2)(1-x_3^2)\\ 
x_2(1-x_1^2)(1-x_3^2) & (1-x_1^2x_2^2)(1-x_3^2)
\end{pmatrix},$$ so
\begin{align*}
    \det(L^3)&=(1-x_1^2)(1-x_2^2x_3^2)\cdot(1-x_1^2x_2^2)(1-x_3^2)-x_2(1-x_1^2)(1-x_3^2)\cdot x_2(1-x_1^2)(1-x_3^2)\\
    &=(1-x_1^2)(1-x_2^2)(1-x_3^2)(1-x_1^2x_2^2x_3^2).
\end{align*} 
This completes the base case.

 Induction step: suppose that the claim holds for $n-1$. We prove it for $n$. First, we subtract the second row, multiplied by $\frac{x_2(1-x_1^2)}{(1-x_1^2x_2^2)}$, from the first row. Denote the resulting matrix by $L^{n'}=[L^{n'}_{i,j}]$. Notice that now every entry in the second row except $L_{1,1}$ is zero, because $L_{1,k}=x_2x_3\cdots x_k(1-x_1^2)(1-x_{k+1}^2\cdots x_n^2)$ and $L_{2,k}=x_3x_4\cdots x_k(1-x_1^2x_2^2)(1-x_{k+1}^2\cdots x_n^2)$ for $k \geq 2$. Since $L_{1,1}=(1-x_1^2)(1-x_2^2x_3^2\cdots x_n^2)$, after subtraction it becomes 
\begin{align*}
L^{n'}_{1,1} &=(1-x_1^2)(1-x_2^2x_3^2\cdots x_n^2)-x_2(1-x_1^2)(1-x_3^2x_4^2\cdots x_n^2)\cdot \dfrac{x_2(1-x_1^2)}{(1-x_1^2x_2^2)}\\
&=\dfrac{(1-x_1^2)(1-x_2^2)(1-x_1^2x_2^2\cdots x_n^2)}{1-x_1^2x_2^2}.    
\end{align*}
Denote by $L^{n"}$ the matrix formed by elimination of the first row and the first column from $L^n$. Note that $L^{n"}$ is $L^{n-1}$ with indeterminates $x_1 x_2, x_3,\ldots, x_n$, so its determinant is equal to $(1-x_1^2 x_2^2)(1-x_3^2)\cdots(1-x_n^2)(1-x_1^2x_2^2\cdots x_n^2)^{n-3}$. In the first row of $L^{n'}$ every entry except the first one is zero, so
\begin{align*}
    \det(L^{n'})&=L^{n'}_{1,1}\cdot \det(L^{n"})\\
    &=\dfrac{(1-x_1^2)(1-x_2^2)(1-x_1^2x_2^2\cdots x_n^2)}{1-x_1^2x_2^2}\cdot (1-x_1^2x_2^2)(1-x_3^2)\cdots (1-x_n^2)(1-x_1^2x_2^2\cdots x_n^2)^{n-3}\\
    &=(1-x_1^2)(1-x_2^2)\cdots (1-x_n^2)(1-x_1^2x_2^2\cdots x_n^2)^{n-2}.
\end{align*}
 This completes the induction step.
\end{proof}

Thus we conclude that the determinant of Varchenko matrix for the $n$-basic degeneracy arrangement is equal to $(1-x_1^2)(1-x_2^2)\cdots (1-x_n^2)(1-x_1^2x_2^2\cdots x_n^2)^{n-2}$.

\subsection{Arrangements with one degeneracy}

In this section, we generalize the proof to arbitrary pseudoline arrangements in $\mathbb{R}^2$ with one degeneracy point $M$. First, we prove a technical lemma concerning the function $\phi$.
\begin{lemma}\label{lemma:7}
Let $P$, $X_1$, $X_2$,\ldots, $X_n$ be products of some indeterminates from the set $\{x_1,x_2,\ldots,x_n\}$ such that $\gcd(P,X_i)=1$ for each $i \in \{1,2,\ldots,n\}$. Then $$\displaystyle{\phi((1-P^2)\cdot\prod^n_{i=1} (1-X_i^2P^2))=1-P^2}.$$
\end{lemma}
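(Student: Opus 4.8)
The plan is to expand the left-hand side completely, apply $\phi$ monomial by monomial, and observe that almost every term cancels in pairs. The first thing I would record is the structural fact that makes $\phi$ tractable: since $\phi$ is defined by its action on a single monomial (truncate every exponent that is $\geq 3$ down to $2$) and extended to polynomials by summing over the monomial basis, it is a $\mathbb{Z}$-linear operator. Consequently I may expand first and then apply $\phi$ term by term. Writing $X_S := \prod_{i \in S} X_i$ for $S \subseteq \{1,\dots,n\}$, the binomial expansion gives
\[
\prod_{i=1}^n (1 - X_i^2 P^2) = \sum_{S \subseteq \{1,\dots,n\}} (-1)^{|S|}\, P^{2|S|}\, X_S^2,
\]
so the left-hand side of the lemma is this sum minus $P^2$ times the same sum.

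The key computation is the value of $\phi$ on a single monomial $P^{2a} X_S^2$. Since $P$ is a product of distinct indeterminates and $\gcd(P,X_i)=1$ for every $i$, the variables occurring in $P$ are disjoint from those occurring in any $X_S$, so $\phi$ acts independently on the two parts. For the $P$-part, each variable of $P$ occurs in $P^{2a}$ with exponent $2a$, which is $\geq 2$ as soon as $a \geq 1$; hence $\phi(P^{2a}) = P^2$ for all $a \geq 1$, while $\phi(P^0)=1$. For the $X$-part, every variable occurring in $X_S$ appears in $X_S^2$ with an even exponent $\geq 2$, so $\phi$ collapses each to exponent exactly $2$; writing $R_S$ for the radical (squarefree part) of $X_S$, this reads $\phi(X_S^2) = R_S^2$. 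Combining the two, $\phi(P^{2a} X_S^2) = P^2 R_S^2$ for every $a \geq 1$, independently of the precise power $a$.

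With this in hand the cancellation is immediate and robust. In the expansion of $\prod_i(1 - X_i^2 P^2)$ the index $S$ carries $P$-power $2|S|$, whereas in the expansion of $-P^2 \prod_i(1 - X_i^2 P^2)$ the same index carries $P$-power $2|S|+2$; both exponents are positive as soon as $S \neq \emptyset$, so the previous paragraph shows the two corresponding $\phi$-images are the same monomial $P^2 R_S^2$ with opposite overall signs $(-1)^{|S|}$ and $(-1)^{|S|+1}$. They therefore cancel term by term for every nonempty $S$, and no collapsing of distinct indices to a common radical $R_S$ can disturb this, precisely because the cancellation is already internal to each fixed $S$. The only survivors come from $S = \emptyset$: the first expansion contributes $\phi(1)=1$ and the second contributes $-\phi(P^2)=-P^2$, leaving exactly $1-P^2$.

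The one point that needs care — and the only place the hypotheses enter — is the monomialwise evaluation of $\phi$. I must confirm that $P$ being a product of \emph{distinct} indeterminates forces $\phi(P^2)=P^2$ exactly, so that the surviving term is $P^2$ and not merely its radical, and that the coprimality of $P$ with each $X_i$ guarantees the $P$-variables and the $X$-variables never interfere when $\phi$ truncates exponents. Once these are checked the remainder of the argument is purely formal bookkeeping.
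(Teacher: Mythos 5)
Your proof is correct and follows essentially the same route as the paper's: both expand the product over subsets $S\subseteq\{1,\dots,n\}$, pair the term coming from the $1$ with the term coming from the $-P^2$ in $(1-P^2)$ for each nonempty $S$, note that both carry a $P$-exponent of at least $2$ so that $\phi$ sends them to the same monomial with opposite signs, and observe that only $S=\emptyset$ survives, leaving $1-P^2$. Your additional remarks --- that $\phi$ is applied linearly on the monomial basis, that $\phi(P^{2a})=P^2$ requires $P$ squarefree, and that the cancellation is internal to each fixed $S$ so collisions between different subsets cannot spoil it --- are correct refinements of points the paper leaves implicit.
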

\begin{proof}
Consider some nonempty subset $B \subseteq \{1,2,\ldots,n\}$. Note that if we consider $Q=(1-P^2)\cdot\prod^n_{i=1} (1-X_i^2P^2)$ as a polynomial in $P$, there are exactly two monomials of the form  $\prod_{i\in B} X_i^2 \cdot P^N$ in its expansion, which come from multiplying by the $1$ and the $P^2$ in $1-P^2$. In both cases $N \geq 2$, so when we apply $\phi$ to the expansion of $Q$, these two monomials will become $P^2\cdot\phi(\prod_{i\in B} X_i^2)$ and $-P^2\cdot\phi(\prod_{i\in B} X_i^2)$, thus cancelling each other out. We cannot use the same arguments when $B$ is the empty set, so $$\phi((1-P^2)\cdot\prod^n_{i=1} (1-X_i^2P^2))=1-P^2$$ as desired.
\end{proof}

Now we are ready to state the first main result of this paper.

\begin{theorem}\label{theorem1degeneracy}
Consider the pseudoline arrangement $\mathcal{A}$ with one degeneracy point $M$ with $n$ lines passing through it. Then the corresponding Varchenko matrix has a block-diagonal form consisting of three blocks: the upper-left and lower-right blocks are in diagonal form whose diagonal entries are exactly $$\prod_{a \in B}(1-x_a^2)$$ ranging over all $B\subseteq I$ such that $H_B \in L(\mathcal{A})$; the middle block is the leftover matrix $L^n$ of the $n$-basic degeneracy arrangement.
\end{theorem}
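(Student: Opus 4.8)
The plan is to mirror the two-part strategy behind Theorem~\ref{lemma:the}, running the diagonalization procedure $V^{(k)}$ and tracking entries through Lemma~\ref{lemma:6}, but stopping the reduction one step short at the degenerate point so that an $(n-1)\times(n-1)$ block survives. First I would produce a numbering of the regions of $\mathcal{A}$, in the spirit of Lemma~\ref{lemma:5}, in which the regions fall into three consecutive groups: an initial group whose members each encompass a single new nondegenerate element of $L(\mathcal{A})$, a middle group consisting of the $2n$ regions incident to $M$ (those whose closures contain $M$), and a final group again encompassing one new nondegenerate element apiece. Away from $M$ the arrangement is locally semigeneral, so the inductive, locally-stretchable construction of Lemma~\ref{lemma:5} applies verbatim to produce the outer groups. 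The one modification is that inside the middle group, after the ambient plane and the $n$ pseudolines through $M$ have been encompassed, completing the encompassing of $M$ itself requires the full cycle of surrounding regions, and the last $n-1$ of them contribute no further diagonal entry and are precisely the regions that will carry the leftover block.

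Next I would run the diagonalization on the outer groups. For every index $k$ at which the newly encompassed element is nondegenerate --- the ambient plane (entry $1$), a pseudoline $l_a$ (entry $1-x_a^2$), or a simple crossing $l_a\cap l_b$ (entry $(1-x_a^2)(1-x_b^2)$) --- Lemma~\ref{lemma:6}(a) supplies the claimed diagonal value $\prod_{a\in B}(1-x_a^2)$, while Lemma~\ref{lemma:6}(b) guarantees that the already-processed off-diagonal entries vanish. Since the argument of Theorem~\ref{lemma:the} is purely local and combinatorial, it applies without change to the initial and final groups and produces the two diagonal blocks in the asserted bijection with the nondegenerate elements of $L(\mathcal{A})$.

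The crux is the middle block. Here I would invoke Lemma~\ref{lemma:6}(c): after the preceding operations, an entry $V^{(k)}_{m,n}$ indexed by two regions around $M$ equals its original separating monomial times $\phi$ of a product $\prod_i\bigl(1-l_i^2(m,n)\bigr)$ recording the already-encompassed elements. The separating data of the regions around $M$ splits into a local part, coming from the $n$ concurrent pseudolines and reproducing exactly the $n$-basic separating sets, and a global part, coming from the pseudolines not through $M$. Writing the local contribution as a power of an indeterminate $P$ and each global contribution as some $X_i$ coprime to $P$, Lemma~\ref{lemma:7} collapses $\phi\bigl((1-P^2)\prod_i(1-X_i^2P^2)\bigr)$ to $1-P^2$, stripping away the global factors. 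Consequently the surviving $(n-1)\times(n-1)$ block has exactly the entries $L^n_{i,i+j}=x_{i+1}\cdots x_{i+j}(1-x_1^2\cdots x_i^2)(1-x_{i+j+1}^2\cdots x_n^2)$ computed in Subsection~\ref{subsection:1}, i.e.\ it is the leftover matrix $L^n$.

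Finally, genuine block-diagonality requires the entries coupling the middle group to the two outer groups to vanish. These are governed by the same mechanism: once the nondegenerate diagonal entries have been cleared, Lemma~\ref{lemma:6}(b) kills the coupling into already-processed rows and columns, and the $\phi$-cancellation of Lemma~\ref{lemma:7} annihilates the residual interaction between a degenerate region and a nondegenerate one. The step I expect to be the main obstacle is the simultaneous bookkeeping at this last stage: verifying that the middle group can indeed be isolated as a single consecutive block in the numbering of the first paragraph, and that every cross-block entry cancels. Concretely this reduces to checking that the global pseudolines separate the relevant region pairs in a way compatible with the coprimality hypothesis of Lemma~\ref{lemma:7}; this is where the bulk of the case analysis lives, whereas the identification of the surviving block with $L^n$ is then a direct appeal to the $n$-basic computation.
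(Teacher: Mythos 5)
There is a genuine gap at the step you yourself flag as the ``main obstacle,'' and the tools you propose for it do not suffice. Lemma~\ref{lemma:6}(b) only asserts that $V^{(k)}_{m,n}=0$ for pairs $m\neq n\leqslant k$, i.e.\ for rows and columns that have \emph{already} been fully eliminated; it says nothing about the entries coupling the $n-1$ leftover rows to the not-yet-processed regions. Those entries are generically nonzero after the standard reduction, and Lemma~\ref{lemma:7} alone does not annihilate them. The paper needs a genuinely new ingredient here: Lemma~\ref{lemma:8}, which shows that $\phi\bigl(\prod_i(1-l_i^2(a,c))\bigr)$ depends only on which cone (formed by the $n$ concurrent pseudolines) the region $a$ lies in. This yields the proportionality $S_{i,j}/S_{g,j}=S_{i,t}/S_{g,t}$, which is what licenses the extra column and row operations that zero out the coupling between the leftover block and the remaining regions. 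Without that cone-invariance statement your plan has no mechanism for clearing these cross terms.

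A second, related gap: once those extra operations are performed, the lower-right block is no longer governed by Lemma~\ref{lemma:6}, because the inductive formula in part (c) presumes that each preceding step eliminated a full row and column, which fails at the degenerate steps you deliberately skip. So you cannot simply ``continue the standard reduction on the final group.'' The paper resolves this by an explicit computation showing that the modified entries take the form $Z\cdot W\cdot V_{k+m,k+q}\cdot(1-Y^2)$, and then by identifying the resulting matrix $S'$ with the reduced Varchenko matrix $U'$ of an auxiliary \emph{semigeneral} arrangement $\mathcal{A}'$ (the second subarrangement with the concurrent lines replaced by a single line $H_a$), to which Theorem~\ref{lemma:the} applies. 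Your proposal contains neither the cone-invariance lemma nor this comparison arrangement; the identification of the surviving $(n-1)\times(n-1)$ block with $L^n$ via Lemma~\ref{lemma:7} is in the right spirit, but the block-diagonality claim itself is unproved as written.
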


\begin{proof}

The idea of the proof is to split the whole arrangement into two parts by pseudolines and their segments in the arrangement so that the border passes through the degeneracy and there are exactly $n-1$ degeneracy-bordering regions in one of the subarrangements. This can be achieved in the following way: we consider the line $H_a$ that is the closest to the degeneracy (if there are several such lines, consider any of them), but not passing through it. Then we mark the points of intersection of this line and lines passing through the degeneracy and take the outermost marked points. Without loss of generality we assume that neither point is at infinity. Denote these two points $P$ and $Q$. We split the arrangement by the pseudoline formed by segments $PM$ and $QM$ and $H_a$ minus the segment $PQ$, as shown in Figure~\ref{fig:my_label2}.
\begin{figure}[h]
    \centering
    \includegraphics[scale=0.6]{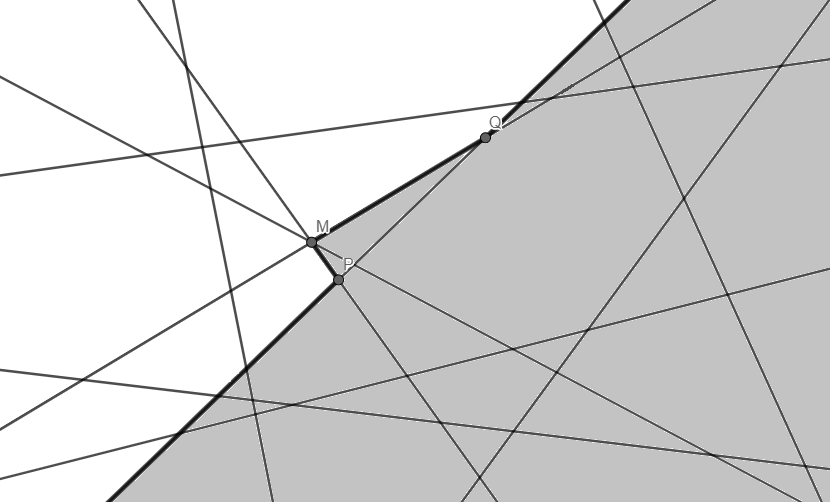}
    \caption{Partition into subarrangements.}
    \label{fig:my_label2}
\end{figure}
Note that we choose this separating line so that there is no line with two disconnected parts in the same subarrangement, so we are not encompassing anything twice.

We do not include the degeneracy point in either subarrangement by cutting out a small enough neighborhood around the degeneracy. We call the subarrangement containing the segment $PQ$ the second, and the other one the first.

Consider the arrangement $\mathcal{A}'$ consisting of $H_a$ and all the lines in the second subarrangement, expanded so that no intersections are introduced. Clearly, $\mathcal{A}'$ is in semigeneral position. We say that a region is an edge region, if it corresponds to the region from the first subarrangement of $\mathcal{A}$ or borders a degeneracy. The proof of Lemma~\ref{lemma:5} actually claims that we can continue doing the good numbering if three properties from the Lemma~\ref{lemma:5} remain true, so we can number first the edge regions in a linear order, i.e. every time we encompass one line. We note that the three properties from Lemma~\ref{lemma:5} remain true. Hence we can continue the numbering using the algorithm as in the proof of Lemma~\ref{lemma:5}.

Now we number the whole arrangement in the following way: we number the first subarrangement in a good way and then number the second subarrangement in a good way so that the regions bordering the degeneracy in the second subarrangement will be indexed first. To obtain the numbering of the second subarrangement, we index the regions bordering degeneracy and number other regions as for $\mathcal{A}'$. 

Now we apply the diagonalization technique introduced in Section~\ref{section 3} to the Varchenko matrix of the whole arrangement. Notice that the entries on the diagonal corresponding to the regions of the first subarrangement are exactly as described in Lemma~\ref{lemma:6}, and the rest of the matrix is not diagonalizable by Theorem \ref{lemma:the}. Denote the submatrix formed by the rows and columns corresponding to the regions of the second subarrangement by $S=[S_{i,j}]$. Notice that the $(n-1)\times (n-1)$ submatrix of $S$ in the upper-left will be exactly $L^n$, because
$$\displaystyle{S_{f,j}=V^{(k)}_{k+f,k+j}=V_{k+f,k+j}\cdot \phi (\prod^k_{i=1} (1-l_i^2(k+f,k+j)))}$$
and if we consider arrangement $\mathcal{B}$ obtained by adding all the regions bordering the degeneracy to the first arrangement we will obtain the arrangement described in Subsection~\ref{subsection:1} and its leftover matrix will have entries $$L^n_{f,j}=V(\mathcal{B})^{(k)}_{k+f,k+j}=V(\mathcal{B})_{k+f,k+j}\cdot \phi (\prod^k_{i=1} (1-l_i^2(k+f,k+j))),$$ which are actually the same.

To continue the proof we need the following lemma.
\begin{lemma}\label{lemma:8}
Consider any three regions $a$, $b$, and $c$ in the second subarrangement. If $a$ and $b$ are in the same cone formed by lines passing through the degeneracy, then  $$\phi (\prod^k_{i=1} (1-l_i^2(a,c)))=\phi (\prod^k_{i=1} (1-l_i^2(b,c))).$$
\end{lemma}
\begin{proof}
We will assume that the regions $R_a$ and $R_c$ border the degeneracy point. For three arbitrary regions $a$, $b$ and $c$, we can apply this proof to the triples of regions $(a_0, a, c_0)$, $(a_0,b,c_0)$ and $(c_0,c,a_0)$, where $a_0$ is the region lying in the same cone formed by lines passing through the degeneracy as $a$, and $c_0$ is the region lying in the same cone formed by lines passing through the degeneracy as $c$, and obtain the desired result.
\begin{figure}[h]
    \centering
    \includegraphics[scale=0.6]{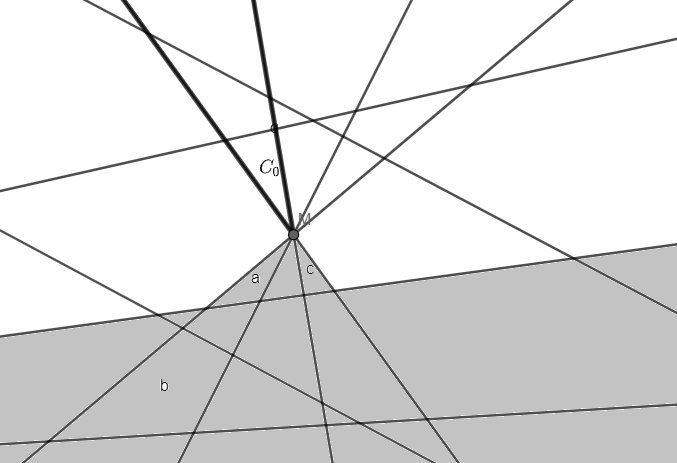}
    \caption{Illustration for Lemma~\ref{lemma:8}.}
    \label{fig:my_label8}
\end{figure}

Consider all the cones formed by lines passing through the degeneracy that are in the first subarrangement. Fix one such cone $C_0$, as is labeled in Figure~\ref{fig:my_label8}. Note that for any region in $C_0$, the lines passing through the degeneracy that separate it from both $R_a$ and $R_c$ are the same. Hence for every region $R_i$ in $C_0$, we have $1-l_i^2(a,c)=1-U_i^2P_{C_0}^2$, where $U_i$ is the product of some indeterminates of lines not passing through the degeneracy. Notice that for the region $R_d$ in $C_0$ bordering the degeneracy, we have $1-l_i^2(a,c)=1-P_{C_0}^2$, thus by Lemma~\ref{lemma:7}, $\phi(\prod_{i : R_i \in C_0} (1-l_i^2(a,c)))=1-P_{C_0}^2$. Applying the same argument to every cone formed by lines passing through a degeneracy, we deduce that the product $\phi (\prod^k_{i=1} (1-l_i^2(a,c)))$ concerns only the weights of lines passing through the degeneracy. Since there is no such line between $a$ and $b$,  we are done.
\end{proof}

Next, we want to turn all the entries $S_{i,j}$ with $i<n$ and $j \geq n$ into zero. Observe that for all $g<n$ $\dfrac{S_{i,j}}{S_{g,j}}=\dfrac{S_{i,t}}{S_{g,t}}$ for some $1\leq t \leq n-1$, so we can multiply the $t$-th column by some polynomial and subtract it from the $j$-th row in order to turn the latter into zero. This is because by Lemma~\ref{lemma:8}, $k+t$ is the index of the region bordering the degeneracy point that lies in the same part of the plane bounded by lines passing through $M$ as the $k+j$-th region.

Then we carry out the same operations on the entries $S_{i,j}$ with $i>n$ and $j \leq n$ and examine the entries $S_{i,j}$ with $i,j \geq n$ after the operations. Denote by $S'$ the matrix obtained by elimination of the first $n-1$ rows and columns of $S$ after performing the operations described above. Note that $$S'_{i,j}=S_{n+i-1,n+j-1}-\dfrac{S_{n+i-1,q}}{S_{m,q}}\cdot S_{m,n+j-1}$$ for some $m,q \in \{1,2,\ldots,n-1\}$. Set $p=n+i-1$ and $s=n+j-1$. Consider the regions $k+p$ and $k+s$. It follows from Lemma~\ref{lemma:6} that $$S_{p,s}=V_{k+p,k+s} \cdot \phi(\prod_{i=1}^{k} (1-l_i^2(k+p,k+s))).$$ Note that by Lemma~\ref{lemma:8} $$\phi(\prod_{i=1}^{k} (1-l_i^2(k+p,k+q)))=\phi(\prod_{i=1}^{k} (1-l_i^2(k+m,k+q)))=\phi(\prod_{i=1}^{k} (1-l_i^2(k+m,k+s))).$$ Denote by $Y$ the product of the weights of lines that separate both $(k+p,k+m)$ and $(k+s,k+q)$, by $Z$ the product of the weights of lines that separate $(k+p,k+m)$ but not $(k+s,k+q)$, and by $W$ the product of the weights of lines that separate $(k+s,k+q)$ but not $(k+p,k+m)$. It follows from Lemma~\ref{lemma:8} that $\dfrac{S_{p,q}}{S_{m,q}}=Y\cdot Z$ and $$S_{m,s}=Y\cdot W\cdot V_{k+m,k+q}\cdot\phi(\prod_{i=1}^{k} (1-l_i^2(k+m,k+s))),$$ 
$$S_{p,s}=Z\cdot W\cdot \phi(\prod_{i=1}^{k} (1-l_i^2(k+p,k+s))).$$ If we write $\prod_{i=1}^{k} (1-l_i^2(k+m,k+s))$ in the form $1+P$ for some polynomial $P$, then we can write $\phi(\prod_{i=1}^{k} (1-l_i^2(k+p,k+s)))$ in the form $1+Y^2P$, so $$S'_{i,j}=Z\cdot W\cdot V_{k+m,k+q}\cdot (\phi(1+Y^2P)-Y^2\phi(1+P)).$$ Let $P=P_1+P_2+...P_h$, where the $P_i$-s are monomials. Note that $\gcd(Y,P_i)=1$ because $P_i$ involves only the weights of the lines passing through the degeneracy. Then $$\phi(1+Y^2P)-Y^2\phi(1+P)=1+Y^2(\sum_{i=1}^h \phi(P_i))-Y^2-Y^2(\sum_{i=1}^h \phi(P_i))=1-Y^2.$$

\

On the other hand, consider the arrangement $\mathcal{A}'$ numbered as described above. Denote its Varchenko matrix by $U=U_{i,j}$. Let the number of edge regions be $e$. Consider the matrix formed by deletion of the first $e$ rows and columns of $U^{(e)}$. Denote it by $U'$. We show that $U'=S'$. By Lemma~\ref{lemma:6} we know that $U'_{h,j}=U_{e+h,e+j}\cdot \phi \big(\prod^e_{i=1} (1-l^2_i(e+h,e+j))\big)$, but we know that for every $a \in \{1,2,\ldots,e\}$,  $l^2_i(e+h,e+j)=1-T_a^2Y^2$ for some monomial $T_a$ and $T_m=1$. Hence $$\phi (\prod^e_{i=1} (1-l^2_i(e+h,e+j))=\phi\Big((1-Y^2)\cdot\prod_{a\in \{1,2,\ldots,m-1,m+1,\ldots,e\}} (1-T_a^2Y^2)\Big)=1-Y^2$$ by Lemma~\ref{lemma:7} and $U_{e+h ,e+j}=Z\cdot W\cdot V_{k+m,k+q}$ by definition of $Z$ and $W$. It follows that $U'_{i,j}=S'_{i,j}$ and thus $U'=S'$. Since $U'$ is diagonalizable over $\mathbb{Z}[x_1,x_2,\ldots,x_n]$, so is $S'$. 

Therefore we conclude that the resulting matrix is of the desired block diagonal form.
\end{proof}

\section{Arrangements with multiple degenerate points}\label{section 5}
In this section, we generalize Theorem~\ref{theorem1degeneracy} to pseudoline arrangements with more than one degenerate points, which is the main theorem of this paper. (See Theorem \ref{theorem45}.)
\begin{theorem}
Let $\mathcal{A}$ be a pseudoline arrangement. The Varchenko matrix of $\mathcal{A}$ has a block diagonal form, whereby the degenerate points correspond to their leftover matrices and non-degenerate elements of the intersection poset $L(\mathcal{A})$correspond to the entries of the form $$\prod_{a \in B}(1-x_a^2)$$ ranging over all $B\subseteq I$ such that $H_B \in L(\mathcal{A})$.
\end{theorem}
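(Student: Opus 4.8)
The plan is to proceed by induction on the number $k$ of degenerate points of $\mathcal{A}$. The base case $k=0$ is exactly Theorem~\ref{lemma:the} (the semigeneral case), and the case $k=1$ is Theorem~\ref{theorem1degeneracy}. For the inductive step, I would assume the result holds for all pseudoline arrangements with at most $k-1$ degenerate points and prove it for an arrangement with $k$ degenerate points.

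First I would select a single degenerate point $M$, say one formed by $n_M$ pseudolines, and isolate it using the separating-pseudoline construction from the proof of Theorem~\ref{theorem1degeneracy}: choose the pseudoline closest to $M$ but not passing through it, mark the outermost intersection points $P$ and $Q$, and split $\mathcal{A}$ into a first and second subarrangement whose common boundary passes through $M$. As in the one-degeneracy case, I would build a good numbering via Lemma~\ref{lemma:5} in which the first subarrangement is indexed first, and within the second subarrangement the $n_M-1$ regions bordering $M$ are indexed before the rest. The only new subtlety is that the remaining $k-1$ degeneracies now live inside the first subarrangement; however, the three properties of Lemma~\ref{lemma:5} are local statements about a small neighborhood of each new region, so they continue to hold and the good numbering extends without change.

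Next I would apply the diagonalization technique of Section~\ref{section 3}. The diagonal entries associated with regions of the first subarrangement come out exactly as in Lemma~\ref{lemma:6}, and the upper-left $(n_M-1)\times(n_M-1)$ submatrix of the second subarrangement's block becomes precisely the leftover matrix $L^{n_M}$, by the same identification of entries with the $n_M$-basic degeneracy arrangement used in Theorem~\ref{theorem1degeneracy}. The crucial point, relying on Lemma~\ref{lemma:8}, is that the $\phi$-products localize: the contribution to each entry coming from the cones at $M$ depends only on the weights of the pseudolines through $M$. Hence the row and column operations that clear the off-diagonal entries adjacent to the $L^{n_M}$ block involve only those pseudolines, and by Lemma~\ref{lemma:7} the resulting complementary block reduces exactly as in the one-degeneracy computation to the form $1-Y^2$ in each entry.

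Finally, I would show that after clearing the $L^{n_M}$ block, the remaining submatrix $S'$ coincides (up to the diagonalization operations already performed) with the Varchenko matrix of the arrangement $\mathcal{A}'$ obtained from $\mathcal{A}$ by perturbing the pseudolines through $M$ into semigeneral position in a neighborhood of $M$, thereby resolving $M$ while leaving every other degeneracy untouched. This $\mathcal{A}'$ has exactly $k-1$ degenerate points, so by the induction hypothesis its Varchenko matrix has the asserted block diagonal form, with leftover matrices for the remaining degeneracies and diagonal entries $\prod_{a\in B}(1-x_a^2)$ for the non-degenerate poset elements. The main obstacle I anticipate is precisely this last identification: one must verify that resolving $M$ disturbs none of the blocks attached to the other degeneracies, i.e. that the $\phi$-localization of Lemma~\ref{lemma:8} holds simultaneously at every degeneracy, and that the separating pseudoline for $M$ can always be chosen so that no pseudoline is split into two disconnected parts within a single subarrangement (ensuring nothing is encompassed twice). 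Because the perturbation is local and each degeneracy's contribution is governed only by the pseudolines through it, once this simultaneous locality is established the block structure of $S'$ is inherited directly from the inductive hypothesis, completing the proof.
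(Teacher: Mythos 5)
Your overall strategy --- induct on the number of degenerate points, cut off one degeneracy $M$ by a separating pseudo-border, and reuse the machinery of Theorem~\ref{theorem1degeneracy} (good numbering, Lemmas~\ref{lemma:7} and~\ref{lemma:8}, identification of the leftover block $L^{n_M}$) --- is the same as the paper's. But the step you defer as ``the main obstacle I anticipate'' is precisely the new content of the paper's argument, and you do not supply it. The construction you import verbatim from the one-degeneracy case (take the pseudoline closest to $M$ not passing through it, mark the outermost intersection points $P$ and $Q$, split along $PMQ$ together with the rest of that pseudoline) has no reason to work once other degeneracies are present: the chosen pseudoline may itself pass through another degenerate point, the border may contain a second degeneracy, some pseudoline may cross the border twice, and --- most importantly --- the subarrangement that must be semigeneral for the argument of Theorem~\ref{theorem1degeneracy} to apply may still contain degenerate points. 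The paper resolves all of this in Lemma~\ref{splittinglemma}: pass to a wiring diagram, choose the degenerate point $M$ with the largest $x$-coordinate so that the cone of $n_M-1$ regions on one side of $M$ is automatically degeneracy-free, and build the border greedily from segments $P_s\cdots P_1MQ_1\cdots Q_t$ along the wires that would otherwise cross that cone twice, guaranteeing properties (a)--(c) of that lemma. Without this (or an equivalent) construction the inductive step does not get off the ground.

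A secondary problem is your closing identification of the residual matrix $S'$ with the Varchenko matrix of the arrangement obtained by perturbing the pseudolines through $M$ into general position near $M$. That is not what the one-degeneracy proof establishes, and as a literal equality of matrices it cannot hold: resolving an $n_M$-fold point replaces $2n_M$ local regions by $1+n_M+\binom{n_M}{2}$, so the perturbed arrangement has $\binom{n_M-1}{2}$ more regions than $\mathcal{A}$, whereas $S'$ discards only the $n_M-1$ rows of the leftover block. What the paper actually proves is that $S'$ equals a partially reduced submatrix $U'$ of the Varchenko matrix of the \emph{semigeneral subarrangement} $\mathcal{A}'$ consisting of $H_a$ and the pseudolines of the second subarrangement, and the induction is arranged so that the already-resolved degeneracies all sit in the first subarrangement while the newly split-off second subarrangement is semigeneral. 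Your inductive step should be reformulated along these lines rather than via a global perturbation.
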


The main idea of the proof is to "resolve" all degenerate points by further splitting the arrangement into nondegenerate subarrangements. In order to do so, we will use a well-studied representation of pseudoline arrangements, called \textit{wiring diagrams}. A wiring diagram is an arrangement of piecewise linear pseudolines called \textit{wires}. The wires are horizontal except in small neighborhoods of their intersections with other wires. An example of a wiring diagram is shown in Figure~\ref{fig:wiring}.

\begin{figure}[h]
    \centering
    \includegraphics[scale=0.3]{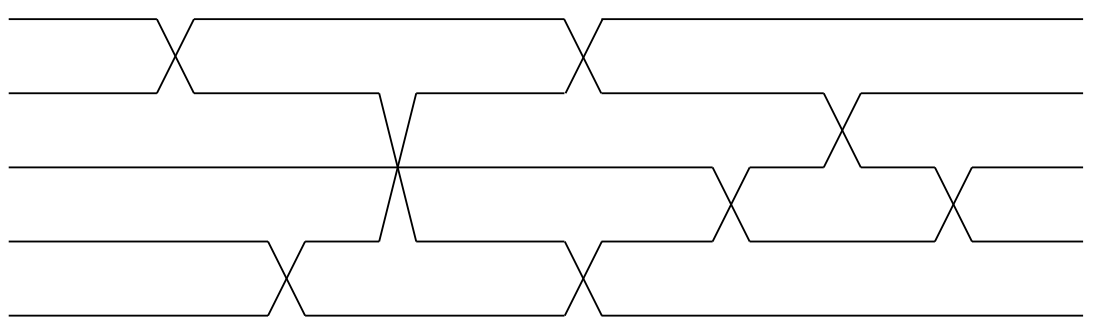}
    \caption{Wiring diagram.}
    \label{fig:wiring}
\end{figure}

The correspondence between wiring diagrams and pseudoline arrangements is captured by the following theorem. See, for instance, \cite{bjorner1999oriented}.
\begin{theorem}[Goodman, Pollack] Every arrangement of
pseudolines is isomorphic to a wiring diagram arrangement. 
\end{theorem}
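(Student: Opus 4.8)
The plan is to encode the combinatorial type of $\mathcal{A}$ by an \emph{allowable sequence of permutations} obtained from a left-to-right topological sweep, to realize that same sequence by an explicitly drawn wiring diagram $W$, and finally to invoke the Topological Representation Theorem (Theorem~\ref{lemma:trt}) to conclude that $\mathcal{A}$ and $W$, having the same underlying oriented matroid, are isomorphic.

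First I would set up the sweep. After projectivizing $\mathcal{A}$, choose a transversal arc $\gamma$ meeting every pseudoline exactly once and avoiding all vertices; as $\gamma$ is swept continuously across the plane it meets the $n$ pseudolines in $n$ points whose vertical order is a permutation $\pi$ of $\{1,\dots,n\}$. I would arrange the sweep to pass through the vertices one at a time, so that $\pi$ is constant between consecutive vertices and a vertex at which a consecutive block of $k$ pseudolines meet reverses the corresponding length-$k$ block of $\pi$. This yields a sequence $\pi_0,\pi_1,\dots,\pi_m$ with $\pi_0$ the identity and $\pi_m$ its reversal, each step a block reversal, and—because every pair of pseudolines crosses exactly once—each transposition $\{i,j\}$ effected exactly once. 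The main technical point here is the existence of a well-defined continuous sweep for a genuinely topological (non-straight) arrangement; I would establish it from the cell structure of the complement, using that $\gamma$ may be advanced across one vertex at a time along a single front (a topological sweep in the sense of Edelsbrunner and Guibas) and that local stretchability near each vertex lets us normalize each event to a single block reversal.

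Next I would build $W$ directly from the allowable sequence: draw $n$ horizontal wires entering on the left in the order $\pi_0$, and at the $t$-th event reverse, within a small neighborhood, the consecutive block of wires prescribed by the move $\pi_{t-1}\to\pi_t$, keeping every wire horizontal elsewhere. By construction $W$ is a wiring diagram whose own sweep reproduces $\pi_0,\dots,\pi_m$, so $\mathcal{A}$ and $W$ induce identical allowable sequences. From such a sweep one reads off all topes: each region corresponds to the sign vector recording, for a point on the swept arc at a given stage, which side of each pseudoline it lies on, and the allowable sequence records exactly these data together with the order in which the sides flip. Hence $\mathcal{A}$ and $W$ have the same set of topes, i.e.\ the same rank-$3$ oriented matroid, with the same $\mathcal{C}(\mathcal{A})=\mathcal{C}(W)$.

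The final and hardest step is to upgrade ``same oriented matroid'' to ``isomorphic as arrangements.'' Here I would lift both projective arrangements to centrally symmetric pseudocircle arrangements on $S^2$ and apply Theorem~\ref{lemma:trt}(c): since $\mathcal{C}(\mathcal{A})=\mathcal{C}(W)$, there is a self-homeomorphism $h$ of $S^2$ carrying one to the other, which descends to the required isomorphism in the plane. The obstacle concentrated in this step is verifying that the data extracted by a single sweep genuinely determine the full set of circuits and covectors, not merely the order type in one direction; I would resolve this by checking that sweeping in the reverse direction and from every starting front recovers the complete local sequence at each pseudoline, which is information equivalent to the chirotope and therefore pins down $\mathcal{C}$.
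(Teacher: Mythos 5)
The paper does not prove this statement at all: it is quoted as a known theorem of Goodman and Pollack, with a pointer to \cite{bjorner1999oriented} (see also \cite{felsner2017pseudoline}), so there is no internal proof to compare yours against. What you have written is essentially the standard literature argument --- sweep the arrangement, extract an allowable sequence of permutations, realize the same sequence by a wiring diagram --- and the outline is sound, including the correct use of the hypothesis that any two pseudolines cross exactly once.

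Two points deserve flagging. First, the step you yourself call the main technical point --- existence of a topological sweep of a genuinely non-stretchable pseudoline arrangement --- carries essentially all of the content of the theorem, and your justification does not discharge it: the Edelsbrunner--Guibas topological sweep is a result about arrangements of straight lines, and the assertion that $\gamma$ ``may be advanced across one vertex at a time along a single front'' is precisely what must be proved. One has to show that at every stage some vertex ahead of the front is passable, i.e., that the pseudolines through it are adjacent along the current front; this is a theorem of Snoeyink and Hershberger on sweeping arrangements of curves and requires an actual argument (e.g., an extremal choice of vertex in the region ahead of the front), so as written this is a genuine gap rather than a routine normalization. Second, your finish via Theorem~\ref{lemma:trt}(c) is heavier than needed and leaves two items unaddressed: the self-homeomorphism of $S^2$ it supplies is not asserted to commute with the antipodal map, which is what lets it descend to the plane, and an isomorphism of affine arrangements must also match the unbounded cells. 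Once the sweep exists, the standard and cleaner conclusion is direct: identical allowable sequences give a stage-by-stage bijection of vertices, edges, and faces of $\mathcal{A}$ and $W$, hence an isomorphism of the induced cell complexes, with no oriented-matroid machinery at all. With the sweep lemma properly proved or cited and the ending either streamlined in this way or patched for equivariance, your proof is correct.
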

Now we provide an algorithm for partitioning pseudoline arrangements.
\begin{lemma}\label{splittinglemma}
For any pseudoline arrangement $\mathcal{A}$ there is a degeneracy point $M$ with the following property: suppose that there are $n$ pseudolines passing through it, then we can divide the plane into two parts by segments of the pseudolines of $\mathcal{A}$ so that
\begin{enumerate}[(a)]
    \item $M$ lies on the border separating the two parts, and $M$ is the only degeneracy point on the border;
    \item every pseudoline of $\mathcal{A}$ intersects the border at most once;
    \item one of the resulting two subarrangements is in semigeneral position and contains $n-1$ regions that have $M$ as a vertex.
\end{enumerate}
\end{lemma}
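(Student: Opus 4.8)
The plan is to reduce to a wiring diagram and then peel off a single \emph{extremal} degeneracy, imitating the detour construction in the proof of Theorem~\ref{theorem1degeneracy}. By the theorem of Goodman and Pollack I may assume that $\mathcal{A}$ is a wiring diagram, and after a generic horizontal perturbation (which preserves the isomorphism type) I may assume the crossing points have pairwise distinct $x$-coordinates, so that the events of the diagram are linearly ordered from left to right. I would take $M$ to be the \textbf{leftmost} degeneracy. Two features of this choice drive the argument. First, every crossing strictly to the left of $M$ is an ordinary two-wire crossing, so all other degeneracies are pushed to the right of $M$; the subarrangement I am going to carve off will live on the left and is therefore automatically in semigeneral position. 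Second, since only consecutive wires may cross in a wiring diagram, the $n$ wires through $M$ occupy $n$ consecutive levels immediately to the left of $M$; writing them $w_1,\dots,w_n$ from top to bottom, the $n-1$ faces lying between consecutive $w_i$ and incident to $M$ are exactly the regions having $M$ as a vertex that I want to capture.

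Next I would construct the border, generalizing the detour used in Section~\ref{section 4}. I pick a wire $H_a$ not through $M$ that is closest to $M$ and transversal to the bundle $w_1,\dots,w_n$, let $P$ and $Q$ be the outermost of its intersection points with the $w_i$ (so that every crossing of $H_a$ with a wire through $M$ lies on the segment $PQ$), and take the border to be $H_a$ with the segment $PQ$ deleted and the detour $P\to M\to Q$ along $w_1$ and $w_n$ inserted, excising a small disk around $M$ exactly as before. This border is a simple bi-infinite curve built from segments of $\mathcal{A}$ and passing through $M$. Declaring the side that does not contain $PQ$ to be the distinguished subarrangement, property (c) holds by construction: by the leftmost choice this side contains no degeneracy other than the excised $M$, hence is semigeneral, and it contains precisely the $n-1$ wedges of $M$ between consecutive $w_i$. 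Property (a) follows because, apart from $M$ itself, the entire border runs along wire segments carrying only simple crossings, so $M$ is the unique degeneracy meeting it.

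The main obstacle is property (b), that every wire meets the border at most once, and this is where the extremality of $M$ does real work. A wire through $M$ meets $w_1$ and $w_n$ only at $M$ (two pseudolines cross at most once) and so meets the border only at the excised point $M$. A wire $w$ not through $M$ crosses $H_a$ exactly once; the outermost choice of $P,Q$ forces that crossing onto the retained part of $H_a$, and the nearest-transversal choice of $H_a$ together with the single-crossing axiom is meant to forbid a second meeting of $w$ with either detour segment $PM$ or $QM$. The delicate case---the one I expect to require the most care---is a wire that crosses the retained part of $H_a$ once and then bends back across $PM$ or $QM$ into the carved region; ruling this out is exactly where I would invoke that $M$ is leftmost, so that no degeneracy lurks on the left to bend such a wire back, and that $H_a$ is the closest transversal.

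A final point to nail down, folded into the verification of (b), is the existence of the connector $H_a$ itself: for honest lines through a point a transversal always exists, but for the leftmost degeneracy of a wiring diagram I would deduce it from the consecutiveness of $w_1,\dots,w_n$ and from the fact that the face immediately to the left of the bundle reaches infinity, routing the connector along a bounded chain of wire segments bordering that face if no single wire is transversal to the whole bundle. Once the border is in place and (a)--(c) are checked, the two parts it determines give the required partition, proving the lemma.
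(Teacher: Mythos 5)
Your choice of an extremal degeneracy in a wiring diagram is the same device the paper uses (leftmost versus rightmost is immaterial), but your border construction is not, and that is where the argument breaks. You reuse the partition of Section~\ref{section 4}: a transversal wire $H_a$ with the segment $PQ$ replaced by the detour $P\to M\to Q$. With that border, the two parts of the plane are (i) the open triangle $PMQ$ \emph{together with} the entire open half-plane of $H_a$ on the far side from $M$ --- these two pieces are joined through the deleted open segment $PQ$ --- and (ii) the half-plane of $H_a$ containing $M$ minus the closed triangle. (Incidentally, you have the labels reversed: the side carrying the $n-1$ wedges of $M$ is the side that \emph{does} contain $PQ$.) The fatal point is that in a wiring diagram $H_a$ is a roughly horizontal curve, so neither of its half-planes is contained in the vertical half-plane $\{x\le x_M\}$ that your leftmost choice of $M$ controls. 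The far half-plane of $H_a$ can perfectly well contain another degeneracy (take two triple points at comparable heights, the second one far to the right of $M$ and on the far side of $H_a$), and then the distinguished subarrangement is not semigeneral: property (c) fails. Extremality of $M$ does no work here because the carved-off piece is not confined to the degeneracy-free side of the line $x=x_M$. This is exactly why the paper cannot simply re-run the Section~\ref{section 4} partition and instead proves Lemma~\ref{splittinglemma} with a different border.

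The paper's construction avoids the problem by never using a transversal wire as the bulk of the border. Taking $M$ to be the degeneracy of largest $x$-coordinate, it carves off the \emph{cone} $C$ formed to the right of $M$ by the $n$ wires through $M$; the border is the boundary of that cone --- two rays of the extreme wires through $M$ --- modified by local detours along the wires $k_i$, $l_i$ that would otherwise cross the cone twice, so as to secure property (b). The carved-off piece then lies entirely in $\{x\ge x_M\}$, which contains no other degeneracy, so (c) is automatic, and (a), (b) are arranged by the detour. If you want to rescue your version, you would have to replace $H_a$ by the cone boundary through $M$ itself and then supply the detour argument for wires meeting that cone twice; as written, the step ``this side contains no degeneracy other than the excised $M$'' is unjustified and false in general.
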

\begin{proof}
Fix a wiring diagram representation of the pseudoline arrangement $\mathcal{A}$ and a coordinate system such that the wires are parallel to the $x$-axis. Consider a degenerate point $M$ with the biggest $x$-coordinate. Suppose that it has coordinates $(x_M;y_M)$ and it is $n$-basic. Consider the line $x=x_M$ and the half-plane $H$ bounded by it from the left. Notice that in $H$, all $n$ wires passing through $M$ are present (in the form of rays), and they form a cone $C$ consisting of exactly $n-1$ regions with $M$ as a vertex. On the other hand, as all the points in the half-plane $H$ have an $x$-coordinate bigger than $x_M$, its interior contains no degeneracy points.

Now, we look at the cone $C$, whose interior contains no degeneracies. If no wire intersects this cone at two points, we use the borders of this cone to split the arrangement. Else, let $k_1$, $k_2 \ldots k_p$ be the wires intersecting the cone in two points such that each has an intersection with the line $x=x_M$ with $y$-coordinate less than $y_M$. Similarly, let $l_1$, $l_2\ldots l_q$ be the wires that intersect the cone in two points and the line $x=x_M$ at some $y$-coordinate bigger than $y_M$. Let us call the border of the cone with the positive slope in the neighborhood of $M$ the \textit{upper border} and the one with a negative slope in the neighborhood of $M$ the \textit{lower border}. 

Consider all the points of intersections of the wires $l_i$ with the lower border. Take the one that is closest to $M$. Let this point be $P_1$ and without loss of generality, assume that it is the intersection of $l_1$ and the lower border. Now consider all the intersection points of $l_1$ and the other $l_i$'s that lie outside of the cone. Take the intersection that is the closest to $P_1$, and denote it by $P_2$. Continue the procedure in analogous way so that we get a finite sequence of points $P_1$, $P_2 \ldots P_s$.

Similarly, consider all the intersection points of the wires $k_i$ with the upper border. Take the one that is the closest to $M$. Let this point be $Q_1$ and assume it is the intersection of $k_1$ and upper border. Now consider all the intersection points of $k_1$ and the other $k_i$'s that lie outside of the cone. Take the point closest to $Q_1$ and call it $Q_2$. Analogously continue the procedure so that we get a finite sequence of points $Q_1$, $Q_2 \ldots Q_t$.
Assume without loss of generality that $P_s$ is the intersection of $l_{s-1}$ and $l_s$, and $Q_s$ is the intersection of $k_{t-1}$ and $k_t$. 

Now we define the border of the splitting as follows. It consists of the ray that lays on $l_s$, starts at $P_s$ and never intersects the cone, the piecewise linear segment $P_sP_{s-1}\ldots P_1MQ_1\ldots Q_t$, and the ray that lays on $k_t$, starts at $Q_t$ and never intersects the cone. It is easy to see that no wire intersects this border more than once; moreover, one of the half-planes obtained by this splitting, namely the one containing the cone $C$, lies solely on the right side of the line $x=x_M$, so there are no degeneracies in its closure other than $M$, and it contains $n-1$ regions that border $M$.
\end{proof}

With the above algorithm, the proof of Theorem~\ref{theorem45} immediately follows by induction. The base case is simply Theorem~\ref{theorem1degeneracy}. At the $n$th step, we resolve a new degenerate point by splitting the arrangement using Lemma~\ref{splittinglemma}. Then we apply the proof of Theorem~\ref{theorem1degeneracy}, with the first subarrangement containing the $n-1$ resolved degeneracies and the second subarrangement in semigeneral position. 

\

As an immediate application, we deduce the determinant formula for the Varchenko matrix of a pseudoline arrangement, which was proved in \cite{hochstattler2018varchenko} for all dimensions using matroidal methods.
\begin{corollary}
Let $\mathcal{A}$ be a pseudoline arrangement and $V$ its Varchenko matrix. Then $$\det(V)=\prod_{M\in L(\mathcal{A})}(1-x_M)^{l(M)},$$ where 
\begin{equation*}
l(M)=\begin{cases}
\mathrm{number\ of\ elements\ of\ the\ intersection\ poset}\ H_B \subseteq M, if\ M\ is\ a\ pseudoline; \\
0,\ if\ M\ is\ a\ nondegenerate\ intersection; \\
n-2,\ if\ M\ is\ a\ degenerate\ intersection\ of\ n\ pseudolines.
\end{cases}
\end{equation*}
\end{corollary}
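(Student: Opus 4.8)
The plan is to read $\det(V)$ directly off the block diagonal form produced by Theorem~\ref{theorem45}. That form is obtained from $V$ by a simultaneous permutation of the region indices (a congruence $P^{\top}VP$, which leaves the determinant unchanged since $\det(P)^2=1$) followed by elementary additions of polynomial multiples of one row or column to another, each of which also preserves the determinant. Hence $\det(V)$ equals exactly the product of the determinants of the diagonal blocks. Should the sign ever be in doubt, it is pinned down by noting that $V$ has $1$'s along its diagonal (the separation set of a region with itself is empty), so $\det(V)\big|_{x=0}=\det(I)=1$, which agrees with the claimed product at $x=0$.

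Next I would enumerate the two kinds of blocks. By Theorem~\ref{theorem45}, each nondegenerate element $M=H_B\in L(\mathcal{A})$ — the ambient plane, each pseudoline $l_a$, and each nondegenerate (double) intersection point — contributes a single diagonal entry $\prod_{a\in B}(1-x_a^2)$, while each degenerate point $M=H_B$ through which $n=|B|$ pseudolines pass contributes its leftover matrix $L^n$. By the lemma of Subsection~\ref{subsection:1}, $\det(L^n)=\prod_{a\in B}(1-x_a^2)\cdot\big(1-\prod_{a\in B}x_a^2\big)^{n-2}$. Multiplying the block determinants gives
\begin{equation*}
\det(V)=\prod_{\substack{M=H_B\in L(\mathcal{A})\\ M\ \mathrm{nondegenerate}}}\ \prod_{a\in B}(1-x_a^2)\ \cdot\ \prod_{\substack{M=H_B\in L(\mathcal{A})\\ M\ \mathrm{degenerate},\ n=|B|}}\Big(\prod_{a\in B}(1-x_a^2)\Big)\Big(1-\prod_{a\in B}x_a^2\Big)^{n-2}.
\end{equation*}

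The remaining step is a regrouping of factors, which I would organize according to the two shapes that occur. The multivariable factors $1-\prod_{a\in B}x_a^2$ with $|B|\geq 2$ arise only from the leftover matrices, and the monomial $\prod_{a\in B}x_a^2$ determines $B$, hence $M$, uniquely: two distinct pseudolines cross at most once, so no two distinct intersection points can lie on a common full set of $\geq 2$ pseudolines. Thus, writing $1-x_M:=1-\prod_{a\in B}x_a^2$, this factor occurs with total exponent exactly $n-2=l(M)$ for each degenerate $M$ and with exponent $0$ for each nondegenerate point, matching the second and third cases. The single-variable factors $1-x_a^2$, by contrast, occur once in the block of $l_a$, once in the block of each nondegenerate point on $l_a$, and once inside $\det(L^n)$ for each degenerate point on $l_a$; summing, the total exponent of $1-x_a^2=1-x_{l_a}$ is $1+\#\{\text{intersection points on }l_a\}$, which is precisely the number of poset elements $H_B\subseteq l_a$, i.e. $l(l_a)$, matching the first case. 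Collecting all factors yields the stated formula, the ambient-plane block (entry $1$) contributing trivially.

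I expect the only genuine subtlety to be this bookkeeping — confirming that every factor is attributed to exactly one $M$ with the correct multiplicity, and that the single-variable and multivariable factors never interfere (which follows from the variable-count and uniqueness arguments above). The structural content is carried entirely by Theorem~\ref{theorem45} together with the explicit value of $\det(L^n)$; once those are in hand, no further geometry is required.
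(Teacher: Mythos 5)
Your proof is correct and takes essentially the same route as the paper, which presents this corollary as an immediate consequence of Theorem~\ref{theorem45} together with the formula $\det(L^n)=(1-x_1^2)\cdots(1-x_n^2)(1-x_1^2\cdots x_n^2)^{n-2}$ from Subsection~\ref{subsection:1}. You merely make explicit the determinant-preservation of the reduction and the regrouping of factors that the paper leaves implicit, and both checks are carried out correctly.
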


\section{Future directions}
It is natural to hope that Theorem~\ref{theorem45} can be generalized to higher dimensions in the following way.
\begin{conjecture}
The Varchenko matrix of an oriented matroid has a block diagonal form, whereby non-degenerate elements of intersection poset correspond to the diagonal entries and degenerate points correspond to their leftover metrices.
\end{conjecture}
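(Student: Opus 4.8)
The plan is to pass to the topological picture and imitate the inductive resolution of degeneracies that drives the proof of Theorem~\ref{theorem45}. By the reduction to loop-free oriented matroids at the end of Section~\ref{section 2} together with the Topological Representation Theorem~\ref{lemma:trt}, it suffices to prove the statement for a pseudohyperplane arrangement $\mathcal{A}$ in $\mathbb{R}^d$. Call a flat $H_B\in L(\mathcal{A})$ \emph{degenerate} when $|B|$ strictly exceeds its codimension and \emph{non-degenerate} otherwise. The non-degenerate part is already under control: Theorem~\ref{lemma:the} shows that a semigeneral arrangement in any dimension has a fully diagonal form with entries $\prod_{a\in B}(1-x_a^2)$, so the whole content of the conjecture is to peel off the degenerate flats one at a time while leaving this diagonal behaviour intact everywhere else.

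I would induct on the number of degenerate flats. The inductive step would try to select a single degeneracy $M$ together with a pseudohyperplane assembled from pieces of $\mathcal{A}$ that splits $\mathbb{R}^d$ into a first part carrying all the remaining degeneracies and a second part that is semigeneral and contains the star of $M$ (the analogue of the cone $C$ in Lemma~\ref{splittinglemma}). One then numbers the regions so that the first part comes first, then the regions of the star of $M$, then the rest of the semigeneral second part, and runs the diagonalization of Section~\ref{section 3}. The goal is to show that the matrix decouples into the block-diagonal form of the first part (supplied by the inductive hypothesis), a single leftover block attached to $M$, and the diagonal form of the second part, with every cross-block entry cleared by the $\phi$-reductions of Lemmas~\ref{lemma:7} and~\ref{lemma:8}.

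The main obstacle, and the reason the statement is currently only a conjecture, is that nearly every geometric ingredient of the planar proof is special to $\mathbb{R}^2$. The splitting of Lemma~\ref{splittinglemma} is built from wiring diagrams and from the total order supplied by the rightmost degenerate point; in $\mathbb{R}^d$ there is no such device, and a degeneracy is no longer just ``$n$ pseudolines through a point'' but an arbitrary high-multiplicity flat of arbitrary codimension. It is therefore unclear whether a separating pseudohyperplane exists that meets each other pseudohyperplane at most once while isolating $M$ with a semigeneral complement. A second, more basic, obstacle is definitional: the leftover matrix is computed combinatorially only for the codimension-two model of Subsection~\ref{subsection:1} ($n$ pseudohyperplanes through one flat), so before the block structure can even be stated precisely one must fix a canonical notion of leftover matrix for a general degenerate flat and determine its entries.

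Finally, even granting a splitting and a definition, the decoupling argument itself must be reworked. Lemmas~\ref{lemma:7} and~\ref{lemma:8} exploit the cone structure around a codimension-two degeneracy and the fact that the surviving reduction involves only the pseudohyperplanes through $M$, via the collapse $\phi\bigl((1-P^2)\prod_i(1-X_i^2P^2)\bigr)=1-P^2$. When several degeneracies of different codimensions share flats, it is not clear that this collapse still isolates exactly one block per degeneracy. I would therefore first attempt the conjecture in the restricted regime where all degeneracies have codimension two and no two of them share a flat, where Lemma~\ref{splittinglemma} and the present machinery seem most likely to extend, and only afterwards look for a genuinely $d$-dimensional resolution procedure to handle the general case.
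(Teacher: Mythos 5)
The statement you were asked to prove is stated in the paper only as a conjecture in the ``Future directions'' section; the paper offers no proof of it, so there is nothing to compare your argument against. Your proposal is, accordingly, not a proof but a research plan: you correctly reduce to pseudohyperplane arrangements via the Topological Representation Theorem~\ref{lemma:trt}, propose to induct on the number of degenerate flats by mimicking the splitting-and-renumbering strategy of Lemma~\ref{splittinglemma} and Theorem~\ref{theorem45}, and then candidly list the points at which the argument is incomplete. Those admissions are exactly where the genuine gaps lie, and they are substantial: (i) no higher-dimensional analogue of Lemma~\ref{splittinglemma} is constructed --- the wiring-diagram device and the ``rightmost degenerate point'' order have no obvious replacement in $\mathbb{R}^d$, and without a separating pseudohyperplane meeting every other pseudohyperplane at most once the renumbering argument cannot even begin; (ii) the leftover matrix is only defined in the paper for the codimension-two model of Subsection~\ref{subsection:1}, so the block structure asserted by the conjecture is not yet a well-posed statement for general degenerate flats; and (iii) the decoupling of cross-block entries rests on Lemmas~\ref{lemma:7} and~\ref{lemma:8}, whose proofs use the cone decomposition around a point in the plane and would need to be re-established for degeneracies of higher codimension, especially when several degenerate flats are nested or share pseudohyperplanes.

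In short, your write-up is a reasonable and well-informed outline of how one might attack the conjecture, and your suggested first step (all degeneracies of codimension two, pairwise non-incident) is a sensible restricted case to try. But as it stands every load-bearing step is deferred, so this cannot be accepted as a proof; it is a correct assessment that the statement remains open, which is consistent with the paper's own treatment of it as a conjecture.
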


\section{Acknowledgements}
This project was conducted as part of the Research Science Institute Program (RSI) at MIT in the summer of 2019. The authors would like to thank RSI and the Department of Mathematics at MIT for the opportunity and the support throughout the project.

\end{document}